\definecolor{linkcolor}{rgb}{0,0,0.7}
\definecolor{urlcolor}{rgb}{0,0,0.4}
\definecolor{citecolor}{rgb}{0.4,0.1,0}
\newcommand{\Const}{\text{Const}}
\newcommand{\diam}{\text{diam}}
\newcommand{\diff}{\text{d}}
\newcommand{\inj}{\text{inj}}
\newcommand{\Ric}{\text{Ric}}
\newcommand{\supp}{\text{supp}}
\newcommand{\dvol}{\ \text{dvol}}
\newcommand{\Vol}{\text{Vol}}
\newcommand{\wto}{\rightharpoondown}
\newcommand{\R}{\ensuremath{\mathbb{R}}}
\newcommand{\Cc}{\ensuremath{\mathcal{C}}}
\newcommand{\Rr}{\ensuremath{\mathcal{R}}}
\newcommand{\Ss}{\ensuremath{\mathcal{S}}}
\newcommand{\gm}{\ensuremath{\gamma}}
\newcommand{\af}{\ensuremath{\alpha}}
\newcommand{\omg}{\ensuremath{\omega}}
\newcommand{\Omg}{\ensuremath{\Omega}}
\newcommand{\lbd}{\ensuremath{\lambda}}
\newcommand{\Lbd}{\ensuremath{\Lambda}}
\newcommand{\fai}{\ensuremath{\varphi}}
\newcommand{\Fai}{\ensuremath{\varPhi}}
\newcommand{\epsl}{\ensuremath{\varepsilon}}
\newcommand{\ubar}[1]{\underaccent{\bar}{#1}}
\newcommand{\ptl}{\ensuremath{\partial}}
\newtheorem{thm}{Theorem}
\newtheorem{lem}[thm]{Lemma}
\theoremstyle{remark}
\newtheorem{rmk*}{Remarks}
\theoremstyle{definition}
\address{Department of Mathematical Sciences\\
	Tsinghua University\\
	Beijing 100084, P.R. China}
\email[]{dch17@mails.tsinghua.edu.cn}
\begin{document}
	
\title[Convergence of manifolds]{Convergence of manifolds under some\\ $L^p$-integral curvature conditions}
\author{Conghan Dong}
\date{}
\maketitle
\begin{abstract}
	Let $\Cc(\Rr,n,p,\Lbd,D,V_0)$ be the class of compact $n$-dimensional Riemannian manifolds with finite diameter $\leq D$, non-collapsing volume $\geq V_0$ and  $L^p$-bounded $\Rr$-curvature condition $\|\Rr\|_{L^p}\leq \Lbd$ for some $p>\frac n2$. Let $(M,g_0)$ be a compact Riemannian manifold and $\Cc(M,g_0)$ the class of manifolds $(M,g)$ conformal to $(M,g_0)$. In this paper we use $\epsl$-regularity to show a rigidity result in the conformal class $\Cc(S^n,g_0)$ of standard sphere under $L^p$-scalar rigidity condition. Then we use harmonic coordinate to show $C^{\af}$-compactness of the class $\Cc(K,n,p,\Lbd,D,V_0)$ with additional positive Yamabe constant condition, where $K$ is the sectional curvature, and this result will imply a generalization of Mumford's lemma. Combining these methods together we give a geometric proof of $C^{\af}$-compactness of the class $\Cc(K,n,p,\Lbd,D,V_0)\cap \Cc(M,g_0)$. By using Weyl tensor and a blow down argument, we can replace the sectional curvature condition by Ricci curvature and get our main result that the class $\Cc(\Ric,n,p,\Lbd,D,V_0)\cap \Cc(M,g_0)$ has $C^{\af}$-compactness.
\end{abstract}
\tableofcontents

\section{Introduction}
The Gromov-Hausdorff convergence theorems of Riemannian manifolds have been studied for a long time since M. Gromov stated a striking result about the $C^{1,\af}$-compactness of the class $\Cc(K,n,\infty,\Lbd,D,V_0)$, which consists of all compact $n$-dimensional Riemannian manifolds with diameter $\leq D$, volume $\geq V_0$ and  $L^p$-bounded sectional curvature condition $\|K\|_{L^p}\leq \Lbd$ for some $p>\frac n2$, see \cite{Gromov:1981} and also \cite{Cheeger:1970}. There are lots of articles in this subject later and one can refer to a survey \cite{Petersen:1997}.

In \cite{Anderson:1990}, M. Anderson used harmonic coordinate to prove a similar compactness but with sectional curvature replaced by Ricci curvature and volume lower bound replaced by injectivity radius, and he also showed $C^{1,\af}$-compactness of the class $\Cc(\Ric,n,\infty,\Lbd,D,V_0)$ with additional $L^{\frac n2}$-smallness sectional curvature condition $\|K\|_{L^{\frac n2}}\leq \epsl$ for small enough $\epsl>0$. By generalizing the volume comparison theorem from the condition $\Ric\geq (n-1)\lbd$ to $\|(\Ric-(n-1)\lbd g)_{-}\|_{L^p}\leq \epsl$ for some $p>\frac n2$ and $\lbd\leq 0$ in \cite{PetersenWei:1997}, P. Petersen and G. Wei got a $C^{\af}$-compactness with the condition $|\Ric|\leq \Lbd$ generalized to the condition $\|\Ric\|_{L^p}\leq \Lbd$ and smallness of $\|(\Ric-(n-1)\lbd g)_{-}\|_{L^p}$. In general without additional geometric conditions, it is easy to see the class $\Cc(\Ric,n,p,\Lbd,D,V_0)$ does not admit $C^{\af}$-compactness.

When given a compact $n$-dimensional Riemannian manifold $(M,g_0)$ and consider the case in a fixed conformal class $\Cc(M,g_0)$, which consists of all metrics $(M,g)$ conformal to $(M,g_0)$, we can have better results. In \cite{Gursky:1993}, M. Gursky proved the following theorem that in the $L^p$-bounded sectional curvature case $\Cc(K,n,p,\Lbd,D,V_0)\cap \Cc(M,g_0)$ has $C^{\af}$-compactness.
\begin{thm}\label{L^p-section}
	Assume that $(M,g_0)$ is a compact Riemannian manifold with dimension $n\geq 3$, then given constants $p>\frac n2$, $\Lbd,V_0,D>0$, the space 
	$$ 
	\{(M,g)\in \Cc(M,g_0): \int_M|K_g|^p\dvol_g\leq \Lbd, Vol(M,g)\geq V_0, \diam_gM\leq D \}
	$$ 
	is compact in the $C^{\alpha}$-topology for any $0<\af<2-\frac n{p}$.
\end{thm}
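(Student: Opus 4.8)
The plan is to prove compactness of the conformal class $\Cc(K,n,p,\Lbd,D,V_0)\cap \Cc(M,g_0)$ by working entirely with the conformal factor. Write $g = u^{\frac{4}{n-2}}g_0$ for a positive function $u \in C^\infty(M)$ (when $n \geq 3$); the problem is then to obtain uniform a priori estimates on $u$ in a Hölder space and to rule out degeneration. First I would record how the sectional curvature of $g$ transforms: schematically, $|K_g|$ involves the full curvature tensor of $g$, which under the conformal change is $R_{g_0}$ plus terms built from $\nabla^2_{g_0}\log u$ and $|\nabla_{g_0}\log u|^2$. The $L^p$ bound $\int_M |K_g|^p \dvol_g \leq \Lbd$ together with the volume bound $\Vol(M,g) \geq V_0$ and diameter bound $\diam_g M \leq D$ should be converted into integral control on derivatives of $u$ with respect to the fixed background metric $g_0$.

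The key steps, in order, would be the following. (1) \emph{Volume and diameter bounds trap $u$ in an $L^s$ range.} From $\Vol(M,g) = \int_M u^{\frac{2n}{n-2}}\dvol_{g_0} \geq V_0$ and (using the diameter bound together with, say, a Sobolev or isoperimetric inequality on the fixed $(M,g_0)$) an upper bound on the volume, one controls $\|u\|_{L^{\frac{2n}{n-2}}(g_0)}$ from above and below. (2) \emph{The curvature equation gives a PDE for $u$.} The scalar curvature of $g$ satisfies the Yamabe-type equation $-c_n \Delta_{g_0} u + R_{g_0} u = R_g u^{\frac{n+2}{n-2}}$, and the $L^p$ bound on $|K_g|$ controls $\|R_g\|_{L^p(g)}$, hence $\|R_g u^{\frac{n+2}{n-2}}\|$ in a suitable background-metric norm. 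Elliptic $L^q$-estimates for $\Delta_{g_0}$ then upgrade the integrability of $u$; iterating (Moser iteration / bootstrap) one aims to get $u \in W^{2,p}(M,g_0)$, and since $p > n/2$, Sobolev embedding gives $u \in C^{\alpha}(M,g_0)$ for $\alpha = 2 - n/p$, together with a lower bound $u \geq c_0 > 0$. (3) \emph{No collapse / no concentration.} Rule out $u \to 0$ somewhere (which would violate the volume or injectivity-type bound coming from non-collapsing) and $u$ blowing up (which would force curvature concentration contradicting the $L^p$ bound, since $p > n/2$ makes the curvature integral supercritical relative to volume scaling). (4) \emph{Compactness.} With uniform two-sided bounds and a uniform $C^\alpha$ (indeed $W^{2,p}$) bound on $u$, the Arzelà–Ascoli / weak compactness theorem extracts a subsequence $u_i \to u_\infty$ in $C^{\alpha'}(M,g_0)$ for any $\alpha' < \alpha$, with $u_\infty > 0$; the limit metric $g_\infty = u_\infty^{\frac{4}{n-2}}g_0$ lies in the class, giving $C^{\alpha'}$-convergence of metrics.

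I expect the main obstacle to be step (3): establishing the uniform lower bound $u \geq c_0 > 0$ and the uniform $L^\infty$ upper bound, i.e.\ preventing both collapse and curvature concentration. The delicacy is that a priori one only has integral control, and the conformal factor could in principle develop a narrow spike (bubbling) carrying fixed volume but escaping $L^\infty$ control while keeping $\int |K_g|^p$ bounded --- this is precisely the phenomenon that fails in the unrestricted class $\Cc(K,n,p,\Lbd,D,V_0)$. The resolution uses that $p > n/2$ is strictly supercritical: a concentration at scale $r \to 0$ would contribute curvature integral of order $r^{\,2p - n}$ on the bubble region relative to... more carefully, rescaling the bubble shows the $L^p$ energy on it stays bounded below by a positive constant if $p < n/2$ but \emph{must go to zero} proportionally when $p > n/2$, yet a nontrivial bubble needs a definite amount of energy --- contradiction. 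Making this rigorous, likely via an $\epsilon$-regularity statement (a small amount of $L^p$ curvature in a metric ball forces a scale-invariant $C^0$ bound on $u$ there) combined with a covering argument and the volume/diameter constraints to pin down finitely many possible concentration points and then exclude them, is the technical heart of the argument; the rest is elliptic bootstrapping and Arzelà–Ascoli.
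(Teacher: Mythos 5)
Your proposal works directly with the conformal factor and the Yamabe-type PDE, which is the general direction of Gursky's original argument and of Li--Zhou; the paper instead uses Anderson's geometric strategy of blowing up the harmonic radius. More importantly, the argument you give to rule out concentration (your step (3)) contains a genuine gap, and that step is the heart of the matter.

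The flaw is in the scaling heuristic. You argue that since $p>\tfrac n2$, a bubble at scale $r\to 0$ would carry curvature $L^p$-energy of order $r^{2p-n}\to 0$, and conclude a nontrivial bubble is impossible. This is false: the standard Yamabe bubble is exactly the pullback of the round metric on $S^n$ by a conformal dilation, and in the conformal class of any $(M,g_0)$ a concentrating sequence $u_i$ produces, after rescaling, a spherical cap whose sectional curvature is $\equiv 1$ in the rescaled metric and hence has $\int |K|^p\dvol$ of order $\Vol(S^n)$, uniformly bounded for all scales. So $p>\tfrac n2$ does \emph{not} by itself forbid $L^{\frac{2n}{n-2}}$-concentration of $u$; it only ensures that the scale-invariant curvature integral near the bubble vanishes in the blown-up limit, which just says the limit away from the cap is flat, not that the bubble is trivial. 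This also creates a circularity in your plan: step (2) (the $W^{2,p}$ bootstrap via $\epsl$-regularity) needs the very non-concentration that step (3) was supposed to supply.

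For contrast, the paper's proof of this theorem (Section \ref{section-L^p-bounded-sectional}) rescales by the $W^{1,q}$-harmonic radius, obtains a complete noncompact flat limit $(N,h)$, and must show $(N,h)=\R^n$. It does so by a case split using the conformal structure: if the $g_0$-volume of the rescaled balls tends to $0$, Lemma \ref{small-volume-domain} gives a uniformly positive Yamabe constant, hence by (\ref{yamabe-term}) a Sobolev inequality and, via Lemma \ref{volume-ratio}, maximal volume growth; if instead the $g_0$-volume stays bounded below, one pulls back the scalar equation and shows (for $R_0\le 0$) the limit conformal factor is a positive constant, forcing $N$ to be compact --- contradiction --- or (for $R_0>0$) uses $Y(M,g_0)>0$ directly. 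None of this is visible in your outline, and without it the proposal does not close. If you want to stay on the PDE side, the correct substitute for your step (3) is a genuine bubble-tree/concentration--compactness analysis (as in Li--Zhou), in which the bubbles are classified and then excluded using the volume and diameter constraints together with the structure of the base manifold; that is considerably more involved than the scaling shortcut you describe.
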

See also the related results in \cite{ChangYang:1990}.
In \cite{LiZhou:2017}, Y. Li and Z. Zhou considered the case with $L^p$-bounded scalar curvature and studied the bubble tree convergence of class $\Cc(R,n,p,\Lbd,D,V_0)\cap \Cc(M,g_0)$, where $R$ is the scalar curvature, and they proved Gursky's result as a corollary. Also additional calculation based on the bubble tree convergence can in fact imply that the class $\Cc(M,g_0)\cap\Cc(\Ric,n,p,\Lbd,D,V_0)$ has $C^{\af}$-compactness. The main method used in those papers is by analyzing the scalar curvature equation deeply. 

In this paper we consider these problems through a more geometric approach used in Anderson's compactness results. In other words we will give a proof of Gursky's result by combining $\epsl$-regularity of the scalar curvature equation together with harmonic coordinate techniques.
This method can be easily generalized to the case with $L^p$-bounded Ricci curvature condition. Since in dimension $3$ there is no difference between the sectional curvature and Ricci curvature condition, our main result is the following.
\begin{thm}\label{L^p-bounded ricci}
	Assume that $(M,g_0)$ is a compact Riemannian manifold with dimension $n\geq 4$, then given constants $p>\frac n2$, $\Lbd,V_0,D>0$, the space 
	$$\{(M,g)\in \Cc(M,g_0): \int_M|\Ric_g|^p\dvol_g\leq \Lbd, Vol(M,g)\geq V_0,\diam_gM\leq D \}$$
	is compact in the $C^{\alpha}$-topology for any $0<\af<2-\frac np$.
\end{thm}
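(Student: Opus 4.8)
The plan is to deduce Theorem~\ref{L^p-bounded ricci} from Theorem~\ref{L^p-section} by showing that, in a fixed conformal class, the $L^p$-bound on Ricci curvature (with the volume and diameter bounds) already forces an $L^p$-bound on the full curvature operator. Write a sequence from the class as $g_i=e^{2u_i}g_0$, with $\int_M|\Ric_{g_i}|^p\dvol_{g_i}\leq\Lbd$, $\Vol(M,g_i)\geq V_0$ and $\diam_{g_i}M\leq D$; the goal is a subsequence converging in $C^\af$ with limit again in the class. Split the curvature operator as $\Rr_{g_i}=W_{g_i}+A_{g_i}$, where $W_{g_i}$ is the Weyl part and $A_{g_i}$ is the usual algebraic (Kulkarni--Nomizu) expression in the Schouten tensor; since $|A_{g_i}|_{g_i}\leq c_n|\Ric_{g_i}|_{g_i}$, the part $A_{g_i}$ is already bounded in $L^p$, and everything reduces to a uniform bound on $\|W_{g_i}\|_{L^p(g_i)}$. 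One should \emph{not} hope for an $L^\infty$-bound on $\Rr_{g_i}$, since $\Ric_{g_i}$ may develop $L^\infty$-spikes while staying $L^p$-bounded; this is precisely why Theorem~\ref{L^p-section} (rather than classical $C^{1,\af}$-compactness) is the right black box.

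The conformal structure enters through two facts: the $(1,3)$-Weyl tensor is a pointwise conformal invariant, so $|W_{g_i}|_{g_i}=e^{-2u_i}|W_{g_0}|_{g_0}$, and the $L^{n/2}$-energy $\int_M|W_{g_i}|_{g_i}^{n/2}\dvol_{g_i}=\int_M|W_{g_0}|_{g_0}^{n/2}\dvol_{g_0}$ is a fixed finite constant. Hence, using $p>\tfrac n2$,
\[
\int_M|W_{g_i}|_{g_i}^p\dvol_{g_i}\ \leq\ \Big(\sup_M|W_{g_i}|_{g_i}\Big)^{\,p-\frac n2}\int_M|W_{g_0}|_{g_0}^{\frac n2}\dvol_{g_0},
\]
and it is enough to bound $\sup_M|W_{g_i}|_{g_i}=\sup_M e^{-2u_i}|W_{g_0}|_{g_0}$, i.e.\ to rule out divergent negative excursions of $u_i$ on $\{W_{g_0}\neq0\}$. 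The mechanism is the conformal transformation law for Ricci curvature: a steep drop of $u_i$ produces large $|\Ric_{g_i}|_{g_i}$ on the transition region, which (again because $n-2p<0$) is heavily weighted in $\int_M|\Ric_{g_i}|^p\dvol_{g_i}$; together with the volume lower bound, which forbids a uniform collapse of $u_i$, the $\epsl$-regularity of the scalar/Ricci curvature equation should confine all such behaviour of $u_i$ to finitely many points $x^{(1)},\dots,x^{(N)}$, away from which $u_i$ and hence $g_i$ are controlled in harmonic coordinates. This is the step where the $\epsl$-regularity and the generalized Mumford lemma proved earlier are used.

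At each $x^{(k)}$ one performs a blow-up: by Hamilton-type point selection, rescale $g_i$ by a divergent factor $\mu_i\to\infty$ and pass (using the volume bound and $\epsl$-regularity to prevent collapse) to a pointed $C^{1,\af}$-limit $(N,g_\infty,x_\infty)$, a complete noncompact manifold with $|W_{g_\infty}|\leq1$ and $|W_{g_\infty}|(x_\infty)=1$. But the concentration takes place at $g_0$-scales tending to zero (by the control on $u_i$ from the previous step), so $(M,g_0)$ flattens to Euclidean $\R^n$ there and $N$ is conformal to flat $\R^n$; hence $W_{g_\infty}\equiv0$ (and, as a cross-check, $\int_M|\Ric_{\mu_i g_i}|^p\dvol_{\mu_i g_i}=\mu_i^{\frac n2-p}\int_M|\Ric_{g_i}|^p\dvol_{g_i}\to0$, so $g_\infty$ is also Ricci-flat). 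This contradicts $|W_{g_\infty}|(x_\infty)=1$. Therefore $\sup_M|W_{g_i}|_{g_i}$ is uniformly bounded, so by the displayed inequality $\|W_{g_i}\|_{L^p(g_i)}$, hence $\|\Rr_{g_i}\|_{L^p(g_i)}$ and $\|K_{g_i}\|_{L^p(g_i)}$, are uniformly bounded. Theorem~\ref{L^p-section} then yields a subsequence converging in $C^\af$ to some $(M,g_\infty)\in\Cc(M,g_0)$ with $\Vol\geq V_0$ and $\diam\leq D$; the $L^p$-Ricci bound passes to $g_\infty$ by weak-$L^p$ lower semicontinuity in harmonic coordinates, so $g_\infty$ lies in the class. (For $n=3$ the Ricci and sectional conditions agree, so that case is contained in Theorem~\ref{L^p-section}; hence the hypothesis $n\geq4$.)

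I expect the genuine obstacle to be the two analytic inputs just invoked: (i) extracting from the conformal law for $\Ric$ and the $L^p$-Ricci bound the a priori statement that $u_i$ is controlled off a finite set --- in particular that a Weyl-curvature spike cannot be produced without the conformal factor concentrating on a shrinking region, so that the blow-up limit really is conformally flat --- and (ii) establishing non-collapsed $C^{1,\af}$-convergence of the rescaled metrics under only an $L^p$-bound on Ricci. These are exactly the places where the $\epsl$-regularity of the scalar curvature equation and the harmonic-coordinate (Mumford-type) compactness established earlier carry the load; granted that, the rigidity ``conformally flat $\Rightarrow$ Weyl-flat'' closes the argument.
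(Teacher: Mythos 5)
Your reduction strategy is a genuinely different route from the paper's: you try to extract a uniform $L^p$-bound on the full curvature $\Rr_{g_i}=W_{g_i}+A_{g_i}$, using $|A_{g_i}|\leq c_n|\Ric_{g_i}|$ for the algebraic part and the conformal $L^{n/2}$-invariance plus a claimed bound on $\sup_M|W_{g_i}|_{g_i}=\sup_M e^{-2u_i}|W_{g_0}|_{g_0}$ for the Weyl part, and then invoke Theorem~\ref{L^p-section} as a black box. The paper never attempts this; it redoes the harmonic-radius blow-up from scratch and, where it cannot conclude $W=0$ for the limit, replaces the Weyl argument by a Liouville-type argument (when $R_0\leq 0$) and a blow-down plus gap lemma (when $R_0>0$).

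The gap is precisely in the claimed a priori bound on $\sup_M|W_{g_i}|_{g_i}$, and it is not a technicality. The dangerous scenario is $u_i\to-\infty$ (i.e.\ $v_i:=e^{(n-2)u_i/2}\to 0$) on a region where $W_{g_0}\neq 0$. This is \emph{not} the same as concentration of the scalar-curvature equation: at the $\epsl$-regularity concentration points the conformal factor $v_i$ is \emph{large} on a shrinking scale, so there $|W_{g_i}|_{g_i}$ is actually \emph{small}. The $\epsl$-regularity Lemma~\ref{epsl-regularity} gives $W^{2,p}$, hence $C^{\af}$, control of $v_i$ away from those finitely many bubble points, but a $C^\af$-bounded nonnegative $v_i$ is perfectly free to converge to $0$ there, which makes $|W_{g_i}|_{g_i}$ blow up on a \emph{fixed} $g_0$-scale, not a shrinking one. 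This is exactly the paper's Case~2, singled out by the condition $\Vol_{g_0}B_{\ubar r_0}^{\tilde g_i}(x_i)\geq\ubar V_0>0$. In that regime the harmonic-radius blow-up limit $(N,h)$ is conformal via the $f_i$ to a \emph{fixed-size} open subset of $(M,g_0)$, not to flat $\R^n$, so your ``concentration takes place at $g_0$-scales tending to zero'' fails and the intended contradiction $|W_{g_\infty}|(x_\infty)=1$ versus conformally-flat never materializes. Since Theorem~\ref{L^p-bounded ricci} (and hence the $L^p$-Weyl bound) is only recovered \emph{a posteriori}, trying to establish it first is circular: the justification would need the very blow-down/gap-lemma analysis you are trying to avoid. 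What the paper does instead is to split by whether $\Vol_{g_0}$ of the blow-up ball vanishes, use the $L^{n/2}$-Weyl invariance only when that $g_0$-volume does vanish (so $\int|W_h|^{n/2}=0$ directly, no sup bound needed), run the Liouville argument inherited from Theorem~\ref{L^p-section} when $R_0\leq 0$, and for the hard case $R_0>0$ perform a second, blow-down rescaling $(N,s_i^{-2}h,x_\infty)$ to show the end of $N$ embeds into a small ball of $(M,g_0)$, whence Lemma~\ref{yamabe-rigidty} and the gap Lemma~\ref{gap-lemma} force $(N,h)\cong\R^n$. Your two flagged ``obstacles'' (i) and (ii) are where this entire machinery lives; they cannot be granted without essentially reproducing the paper's proof.
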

The proof is a usual contradiction argument by blowing up the harmonic radius. Note that unlike before, in the Ricci curvature case we can only get a Ricci-flat limit space. But we have the conformal invariance of $L^{\frac n2}$-norm of Weyl tensor, and Ricci-flat together with Weyl-flat implies flat. So when the limit space comes from blowup at some point, we can use Weyl tensor to show that the Ricci curvature case dose not make big difference from the sectional curvature case when restricted in a conformal class. For the remaining case, we will use a blow down method to show the infinity of the limit space lies in a sufficiently small neighborhood of some point in a compact manifold, which together with a gap lemma can imply such limit space is $\R^n$, which gives the desired contradiction.

Before proving the main results, we will first examine these techniques separately. Our first result is about rigidity in a conformal class of $S^n$ under scalar curvature rigidity condition.
\begin{thm}\label{L^p-constant scalar}
	Assume $(S^n,g_0)$ is the standard sphere with $n\geq 3$. Given $\epsl>0$, there exists a $\delta(n,\epsl)>0$ such that if $ (S^n,g)\in \Cc(S^n,g_0) $ with same volume $$\Vol_gS^n=\Vol_{g_0}S^n$$ and $L^p$-constant scalar curvature condition
	$$ \int_{S^n}|R_{g}-n(n-1)|^p \dvol_{g}\leq \delta$$ for some $ p>\frac{n}{2} $,
	then the $C^{\af}$-Gromov-Hausdorff distance between $(S^n,g)$ and $(S^n,g_0)$ is at most $\epsl$ for any $0<\af<2-\frac np$.
\end{thm}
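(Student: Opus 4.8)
The plan is to argue by contradiction. Suppose the statement fails: then there is some $\epsl_0>0$ and a sequence $(S^n,g_i)\in\Cc(S^n,g_0)$ with $\Vol_{g_i}S^n=\Vol_{g_0}S^n$ and $\int_{S^n}|R_{g_i}-n(n-1)|^p\dvol_{g_i}\to 0$, yet the $C^{\af}$-Gromov--Hausdorff distance from $(S^n,g_i)$ to $(S^n,g_0)$ stays $>\epsl_0$. Write $g_i=u_i^{4/(n-2)}g_0$; the scalar curvature equation reads $L_{g_0}u_i:=-\tfrac{4(n-1)}{n-2}\Delta_{g_0}u_i+R_{g_0}u_i=R_{g_i}u_i^{(n+2)/(n-2)}$, so the hypothesis says $R_{g_i}u_i^{(n+2)/(n-2)}$ is close in a weighted sense to $n(n-1)\,u_i^{(n+2)/(n-2)}$, i.e.\ $u_i$ is an almost-solution of the Yamabe/Nirenberg equation on the round sphere. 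The volume normalization $\int_{S^n}u_i^{2n/(n-2)}\dvol_{g_0}=\Vol_{g_0}S^n$ gives a uniform $L^{2n/(n-2)}$ bound.

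The key step is an $\epsl$-regularity (concentration-compactness) analysis for this equation, which is the tool the introduction promises to use. First I would rule out bubbling: if the conformal factors $u_i$ concentrate at a point, then after the standard conformal rescaling (stereographic projection centered at the concentration point, composed with a dilation tuned to the concentration scale) one extracts in the limit a positive solution of $-\Delta v = c\, v^{(n+2)/(n-2)}$ on $\R^n$ with $\int v^{2n/(n-2)}<\infty$; by the Caffarelli--Gidas--Spruck classification this is again a round metric, and a careful accounting of the volume (each bubble carries at least the full round-sphere volume) together with the fixed total volume $\Vol_{g_0}S^n$ forces the number of bubbles to be exactly one and no residual part, i.e.\ the $u_i$ do not truly concentrate after relabeling by conformal automorphisms of $S^n$. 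Concretely: compose $g_i$ with a sequence $\psi_i\in\mathrm{Conf}(S^n,g_0)$ so that the new conformal factors $\tilde u_i$ satisfy a uniform non-concentration bound $\int_{B_r(x)}\tilde u_i^{2n/(n-2)}\le \tfrac12\Vol_{g_0}S^n$ for all $x$ and some fixed small $r$; since GH-distance is a metric quantity invariant under isometries and the limit target $(S^n,g_0)$ is homogeneous, it suffices to contradict convergence for the $\tilde g_i=\tilde u_i^{4/(n-2)}g_0$.

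With non-concentration in hand, local elliptic estimates for the (uniformly elliptic, with smooth coefficients on the fixed background $(S^n,g_0)$) equation $L_{g_0}\tilde u_i = f_i \tilde u_i^{(n+2)/(n-2)}$, where $f_i=R_{\tilde g_i}\to n(n-1)$ in the sense that $\|(f_i-n(n-1))\tilde u_i^{(n+2)/(n-2)}\|$ is small, bootstrap to give uniform $C^{2,\beta}_{loc}$ bounds on $\tilde u_i$ away from the (now empty) bad set, hence uniform $C^{2,\beta}$ bounds on all of $S^n$; also $\tilde u_i$ is bounded below away from $0$ by the Harnack inequality. Passing to a subsequence, $\tilde u_i\to u_\infty$ in $C^{2,\beta'}$ with $u_\infty>0$ solving $L_{g_0}u_\infty=n(n-1)u_\infty^{(n+2)/(n-2)}$ and $\int u_\infty^{2n/(n-2)}\dvol_{g_0}=\Vol_{g_0}S^n$, i.e.\ $g_\infty=u_\infty^{4/(n-2)}g_0$ is a metric of constant scalar curvature $n(n-1)$ in the conformal class of the round sphere with the same volume as the round metric. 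By Obata's theorem (the equality case in the sphere's conformal class), $g_\infty$ is isometric to $g_0$ via a conformal automorphism, so in particular $(S^n,\tilde g_i)\to(S^n,g_0)$ in $C^\af$ and a fortiori in $C^\af$-Gromov--Hausdorff distance, contradicting $d_{C^\af}((S^n,\tilde g_i),(S^n,g_0))>\epsl_0$. Finally $\delta(n,\epsl)$ is obtained by unwinding the contradiction: if no such $\delta$ worked one could build exactly such a sequence.

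The main obstacle is the bubbling dichotomy: making the conformal-rescaling/non-concentration argument fully rigorous — quantifying ``almost solution'', choosing the automorphisms $\psi_i$, and tracking volume so that the fixed volume constraint $\Vol_{g_i}S^n=\Vol_{g_0}S^n$ genuinely excludes nontrivial bubble trees — is where the real work lies; once non-concentration holds, the elliptic bootstrap and Obata's rigidity are routine. One should be careful that the weighted smallness of $R_{g_i}-n(n-1)$ in $L^p$ with $p>n/2$ is exactly the borderline integrability that makes $L^p$ of the right-hand side control $u_i$ in $W^{2,p}\hookrightarrow C^{0,2-n/p}$, which is also the source of the exponent $\af<2-n/p$ in the conclusion.
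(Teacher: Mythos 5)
Your proposal is essentially the same approach as the paper's: argue by contradiction, extract the concentration scale, blow up at the concentration point via stereographic-projection dilations (which are conformal automorphisms of $S^n$), identify the bubble as the round metric, and use the volume normalization to forbid a second bubble or residual mass. The paper phrases the last step slightly differently --- it does not rule out bubbling \emph{a priori}, but instead observes that the blow-up maps $f_k=\pi^{-1}\circ(\cdot/r_k)\circ\pi$ are diffeomorphisms of $S^n$ and shows $f_k^*g_k\to g_0$ first away from the antipodal point $y_0$, then globally because a second concentration at $y_0$ would violate $\Vol_{g_k}S^n=\Vol_{g_0}S^n$; this is the same dichotomy you describe via re-centering by $\psi_i\in\mathrm{Conf}(S^n,g_0)$. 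One technical overclaim to fix: since $R_{g_i}$ is controlled only in $L^p$ with $p>n/2$, the elliptic bootstrap yields uniform $W^{2,p}_{loc}$ bounds (hence $C^{\af}$ for $\af<2-\tfrac{n}{p}$), not $C^{2,\beta}$ bounds; your closing remark about $W^{2,p}\hookrightarrow C^{0,2-n/p}$ already corrects this, and the theorem's conclusion is indeed only $C^{\af}$-convergence. With that adjustment, the Obata/Caffarelli--Gidas--Spruck identification of the bubble and the volume accounting go through exactly as the paper does.
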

If we write $g=u^{\frac{4}{n-2}}g_0$, then $u$ satisfies an elliptic PDE from the scalar curvature equation, see in Section \ref{L^p constant on S^n} below, and we have $\epsl$-regularity lemma which states that we can have uniform bounded $W^{2,p}$-norm of $u$ whenever we have bounded $L^p$-norm and small $L^{\frac{2n}{n-2}}$-norm of $u$ on covering domains with uniform size. This rigidity result can then be derived if there are no such $L^{\frac{2n}{n-2}}$-concentration points. Otherwise we will blow up at such point by the pullback of scalar dilation over $\R^n$ through the stereographic projection. After applying such induced blowup maps which are diffeomorphisms on $S^n$, we can get desired convergence.

Our second result is by using harmonic coordinate to get a compactness result under $L^p$-bounded sectional curvature and additional positive Yamabe constant condition. For the definition and properties of Yamabe constant, you can see below in Section \ref{positive-yamabe-constant and sobolev constant}.
\begin{thm}\label{positive-yamabe}
	Given constants $n\geq 3$, $p>\frac n2$, $\Lbd, D, V_0>0$, and a sequence of compact $n$-dimensional Riemannian manifolds $$(M_i,g_i)\in \Cc(K,n,p,\Lbd,D,V_0).$$ If there exists a uniform positive radius $s_0$ and a constant $Y_0>0$ such that for all $x\in M_i$ the Yamabe constant $$Y(B_{s_0}^{g_i}(x),g_i)\geq Y_0,$$
	then there is a subsequence converges in the $ C^{\af}$-topology for any $0<\af<2-\frac{n}{p}$.
\end{thm}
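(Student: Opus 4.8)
The plan is to prove this via the standard harmonic-radius argument: establish a uniform lower bound on the $C^{\alpha}$-harmonic radius, then invoke the general convergence/compactness machinery (Anderson--Cheeger theory) to extract a $C^{\alpha}$-convergent subsequence. The key quantitative input is an $\epsl$-regularity type estimate: on a geodesic ball where the $L^p$-norm of sectional (hence Ricci) curvature is small, the harmonic radius is bounded below. More precisely, for each $\delta>0$ one expects a threshold $\epsl_0(n,p,\delta)>0$ so that if $\int_{B_r^{g}(x)}|K_g|^p\dvol_g \le \epsl_0$ and $r$ is small, then $r_h(x)\ge \delta r$, where $r_h$ denotes the $C^{\alpha}$-harmonic radius. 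The subtlety in the $L^p$ setting (as opposed to the pointwise-bounded-curvature setting of Anderson) is that one must control the harmonic radius from a contradiction/blow-up argument where the rescaled metrics converge weakly in $W^{2,p}$, using elliptic regularity for the harmonic-coordinate equation $\Delta_g x^k = 0$, whose coefficients involve $g$ and whose Ricci curvature controls the Laplacian of $g$ in these coordinates.

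The role of the positive Yamabe constant hypothesis is to produce a uniform local Sobolev inequality: a lower bound $Y(B_{s_0}^{g_i}(x),g_i)\ge Y_0$ gives a uniform constant in the $L^{2n/(n-2)}$-Sobolev inequality on balls of radius $s_0$, which in turn (together with the volume noncollapsing $\Vol\ge V_0$ and the diameter bound) yields a uniform lower volume ratio bound $\Vol(B_r^{g_i}(x))\ge c\, r^n$ for $r\le s_0$. This noncollapsing-at-all-scales estimate is exactly what is needed to run the blow-up: if the harmonic radius degenerates at points $x_i$, rescale $g_i$ by $r_h(x_i)^{-2}$; the Sobolev constant is scale-invariant so it survives the rescaling, the curvature $L^p$-norm scales like $r_h(x_i)^{2-n/p}\to 0$ since $p>n/2$, and the volume ratios stay bounded below, so a subsequence converges in $C^{\alpha'}$ (for $\alpha'$ slightly less than $\alpha$) on compact sets to a complete manifold that is flat (curvature $L^p$-norm zero), noncollapsed, and has harmonic radius exactly $1$ at the base point --- contradicting that a complete flat noncollapsed manifold is $\R^n$ and hence has infinite harmonic radius.

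I would organize the argument as follows. \emph{Step 1.} Use the Yamabe lower bound plus $\Vol\ge V_0$, $\diam\le D$ to derive a uniform local Sobolev inequality and thence a uniform volume ratio lower bound $\Vol(B_r^{g_i}(x))\ge c_0 r^n$ for all $x\in M_i$ and $r\le \min(s_0,D)$; this is a Moser-iteration / Croke-type computation using that the Yamabe quotient controls the isoperimetric/Sobolev constant. \emph{Step 2.} Prove the local $\epsl$-regularity statement: given the Sobolev constant bound and $\int_{B_r}|K|^p$ small, the metric $g$ in $g$-harmonic coordinates on a smaller ball has $C^{\alpha}\cap W^{2,p}$ norm controlled, hence $r_h\ge \delta r$; this uses $|\Delta_{g} g_{kl}| \lesssim |\Ric_g| + |\partial g|^2$ in harmonic coordinates and $L^p$-elliptic estimates, iterated. \emph{Step 3.} Run the blow-up contradiction argument sketched above to conclude a uniform positive lower bound $r_h(x)\ge \rho_0>0$ for all $x\in M_i$. \emph{Step 4.} With uniform harmonic radius, uniform diameter and noncollapsing, apply the standard precompactness theorem (e.g. as in Anderson's paper or Petersen's survey, both cited in the excerpt) to extract a $C^{\alpha}$-convergent subsequence. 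The main obstacle is Step 2/Step 3: one must carefully track how the $L^p$-curvature bound, being subcritical for $p>n/2$, interacts with the harmonic-coordinate elliptic estimates under rescaling, and ensure the limit space is genuinely smooth and flat rather than merely a $W^{2,p}$ metric --- the flatness is immediate from vanishing $L^p$-curvature on the limit, but upgrading regularity of the limit to conclude it is $\R^n$ requires the gap/rigidity statement that a complete flat manifold with Euclidean volume growth is $\R^n$.
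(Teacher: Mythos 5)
Your proposal follows essentially the same route as the paper: blow up at a sequence of points where the $W^{1,q}$-harmonic radius degenerates, invoke the Gromov/Anderson compactness for bounded harmonic norm to extract a $C^{\af}$ (and, via the harmonic-coordinate elliptic system, weak $W^{2,p}$) limit $(N,h)$ that is flat because the rescaled $L^p$-curvature norm scales like $\epsl_i^{2-n/p}\to 0$, and use the scale/conformal invariance of the Yamabe constant plus the vanishing of the rescaled $L^p$-scalar term to produce a uniform Dirichlet--Sobolev constant, hence Euclidean volume growth, hence $(N,h)=\R^n$, contradicting the normalized harmonic radius $1$. One small imprecision worth flagging: a lower bound on the Yamabe constant does not by itself give a Sobolev inequality before rescaling (the scalar-curvature term in the Yamabe functional can be negative and large); the paper obtains the Sobolev constant only on the rescaled balls, where the weighted $L^p$-scalar term is controlled by $\epsl_i^{2-n/p}\Lbd^{1/p}(2\omg_n r^n)^{2/n-1/p}\to 0$ (condition $(*)$), so your Step 1 should really be folded into the post-rescaling Step 3 where, as you correctly note, the curvature smallness makes the conversion work.
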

The proof of such compactness result is standard. By blowup of the harmonic radius, we get a flat complete non-compact limit manifold, and as usual done in \cite{Anderson:1990} we need to show the limit space is isometric to Euclidean space to get a contradiction. So it is suffice to show the limit flat manifold has maximal volume growth, and in our cases we usually use uniform Sobolev inequality, which is ensured by assumed uniformly positive Yamabe constant, to get such volume growth.

As a corollary, we can prove the generalization of Mumford's lemma for locally conformal flat manifolds.
\begin{thm}\label{Mumford's lemma}
	Given constants $n\geq 3$, $p>\frac n2$, $\Lbd, D, V_0>0$, and a sequence of compact $n$-dimensional Riemannian manifolds $$(M_i,g_i)\in \Cc(K,n,p,\Lbd,D,V_0).$$ If $(M_i,g_i)$ are non-simply connected locally conformal flat manifolds and the length of all nontrivial elements in $\pi_1(M_i)$ has a uniform positive lower bound, then there is a subsequence converges in the $ C^{\af}$-topology for any $0<\af<2-\frac{n}{p}$.
\end{thm}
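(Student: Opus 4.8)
The plan is to deduce this as a corollary of Theorem~\ref{positive-yamabe}: it suffices to produce a uniform radius $s_0>0$ and a constant $Y_0>0$ such that $Y(B_{s_0}^{g_i}(x),g_i)\ge Y_0$ for every $i$ and every $x\in M_i$. Let $\ell_0>0$ be the assumed uniform lower bound for the lengths of the nontrivial elements of $\pi_1(M_i)$. First I would fix $s_0$ to be a small fraction of $\ell_0$; since any two lifts of $x$ to the universal cover $\widetilde M_i$ are at distance $\ge\ell_0>2s_0$, the covering $\pi_i\colon\widetilde M_i\to M_i$ then restricts to an isometry from the metric ball $\widetilde B:=B_{s_0}^{\tilde g_i}(\tilde x)$ onto $B_{s_0}^{g_i}(x)$ for any lift $\tilde x$. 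Because $(M_i,g_i)$ is locally conformally flat, $\widetilde M_i$ carries a developing map $\Phi_i\colon\widetilde M_i\to S^n$, a conformal local diffeomorphism. The key claim is that, after shrinking $s_0$ to a value that is still uniform in $i$, $\Phi_i$ is injective on each such ball $\widetilde B$; granting this, $(B_{s_0}^{g_i}(x),g_i)\cong(\widetilde B,\tilde g_i)$ is conformally equivalent to a domain $\Omega\subseteq S^n$.

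It then remains to observe that $Y(\Omega,g_0)=Y(S^n)$ for \emph{every} domain $\Omega\subseteq S^n$, where $g_0$ denotes the round metric: testing the Yamabe functional with bump functions concentrating at an interior point of $\Omega$ recovers the Euclidean (equivalently, round) Sobolev constant and gives $Y(\Omega,g_0)\le Y(S^n)$, while extending an arbitrary $u\in C_c^\infty(\Omega)$ by zero and invoking the sharp Sobolev inequality on $(S^n,g_0)$ gives the reverse inequality. By the conformal covariance of the conformal Laplacian the Yamabe constant depends only on the conformal class, so $Y(B_{s_0}^{g_i}(x),g_i)=Y(\Omega,g_0)=Y(S^n)=:Y_0>0$. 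This is uniform in $i$ and $x$, so Theorem~\ref{positive-yamabe} applies and yields the desired $C^{\af}$-compactness.

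The main obstacle is exactly the injectivity of $\Phi_i$ on the balls $\widetilde B$, uniformly in $i$: a simply connected locally conformally flat manifold need not embed conformally into $S^n$ (the developing map can ``wrap around''), so the systole bound by itself is not enough. To close this gap one must feed in the $L^p$-curvature bound together with the non-collapsing and diameter bounds: these force, via the harmonic-coordinate estimates already behind Theorem~\ref{positive-yamabe}, that on a ball of some definite radius the metric $\tilde g_i$ is $C^{1,\af}$-close to the flat metric, whence $\Phi_i$ is $C^1$-close to a Euclidean similarity on that ball and therefore injective there. An alternative, and arguably more self-contained, route is to argue by contradiction parallel to the proof of Theorem~\ref{positive-yamabe}: if no subsequence converged, blowing up the harmonic radius would produce a complete noncompact pointed limit that is flat (the rescaled $L^p$-norm of curvature tends to $0$ because $p>\frac n2$) and locally conformally flat, and on which the rescaled systoles tend to $\infty$; using the developing map one shows such a limit has maximal volume growth, hence is isometric to $\R^n$, contradicting the normalization that its harmonic radius equals $1$. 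In either approach the crux is controlling the global conformal structure of small balls, which is precisely where local conformal flatness and the systole bound come in.
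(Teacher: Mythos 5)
Your overall strategy---reducing to Theorem~\ref{positive-yamabe} by producing a uniform positive Yamabe constant on uniformly-sized balls, and using the systole lower bound to make the covering $\pi_i\colon B_{s_0}^{\tilde g_i}(\tilde x)\to B_{s_0}^{g_i}(x)$ an isometry---is exactly the paper's. Where you diverge, and where there is a genuine gap, is in how you extract the positivity of the Yamabe constant. You try to show that the developing map $\Phi_i\colon\widetilde M_i\to S^n$ is injective on each lifted ball $\widetilde B$, so that $\widetilde B$ is conformal to a domain $\Omega\subset S^n$ and hence $Y(\widetilde B)=Y(\Omega)=Y(S^n)$. As you yourself note, the systole bound gives no control whatsoever on where $\Phi_i$ is injective---it constrains the deck transformation group, not the developing map---so this claim does not follow from the stated hypotheses. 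The two rescue routes you sketch do not close the gap: the first (``feed in the $L^p$-curvature bound\ldots via the harmonic-coordinate estimates already behind Theorem~\ref{positive-yamabe}'') is circular, since those harmonic-coordinate estimates are precisely what the positive-Yamabe hypothesis is needed to produce; and the second (blow up the harmonic radius and deduce maximal volume growth ``using the developing map'') leaves unexplained exactly the volume-growth step that the Yamabe constant was supposed to supply.

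The paper avoids the injectivity question entirely by working globally on the universal cover. Since $\widetilde M_i$ is a simply connected locally conformally flat manifold, the existence of the conformal immersion $\Phi_i\colon\widetilde M_i\to S^n$ already implies $Y(\widetilde M_i)=Y(S^n,g_0)>0$ by a theorem of Schoen--Yau (cited to \cite{SchoenYau:1994}), with no injectivity of $\Phi_i$ required. Domain monotonicity of the Yamabe constant then gives $Y(B_{l_0/2}^{\tilde g_i}(\tilde x),\tilde g_i)\geq Y(\widetilde M_i)=Y(S^n)$, and the isometry onto $B_{l_0/2}^{g_i}(x)$ transfers this to $M_i$. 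So the missing ingredient in your argument is precisely this global Schoen--Yau result: once it is invoked, there is no need to localize the developing map, and the proof closes.
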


This paper is organized as following. In next section we recall and prove some basic results of PDE and geometry which are needed later. The remaining sections are all devoted to prove results as entitled. In the following, we will use the same notation $C$, which may be different from line to line, to denote a uniform constant not depending on the sequence. And we will often write the metric $g$ on super- or sub-script of a geometric subject to emphasize the considered metric $g$.

\textbf{Acknowledgment}. The author would like to express his deep gratitude to his advisor Professor Yuxiang Li for helpful ideas and discussions.

\section{Preliminary results}

\subsection{Estimates of PDE}
We first recall the $L^p$-estimate in elliptic PDE, see e.g. \cite{GilbargTrudinger:2001} for details.
\begin{lem}[$L^p$-estimate]\label{L^p-estimate}
	Over smooth domain $\Omg\subset \R^n$ for equation of $u\in W^{1,2}(\Omg)$
	$$-a^{ij}\ptl_i\ptl_ju+b^i\ptl_iu=f,$$
	with $$C_1^{-1}\leq a^{ij}\leq C_1,\ \|a^{ij}\|_{C^0}+ \|b^i\|_{L^q}\leq C_2$$ for some $q>n$, then for $p<q$
	we have interior estimate over $\Omg'\subset\subset\Omg$
	$$\|u\|_{W^{2,p}(\Omg')}\leq C(\Omg',\Omg,C_1,C_2)(\|u\|_{L^p(\Omg)}+\|f\|_{L^p(\Omg)}).$$
	If also $u\in W^{1,2}_0(\Omg)$ then we have global estimate
	$$\|u\|_{W^{2,p}(\Omg)}\leq C(\Omg,C_1,C_2)(\|u\|_{L^p(\Omg)}+\|f\|_{L^p(\Omg)}).$$
\end{lem}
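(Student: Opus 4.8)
The plan is to run the classical freezing-of-coefficients argument, reducing everything to the Calder\'on--Zygmund estimate for the Laplacian and treating the drift term $b^i\partial_i u$ as a genuinely lower-order perturbation that the hypothesis $q>n$ renders subcritical. First I would record the model estimate: for $-\Delta w=F$ on $\mathbb{R}^n$, the Newtonian potential representation together with the Calder\'on--Zygmund singular-integral bound gives $\|D^2w\|_{L^p}\le C(n,p)\|F\|_{L^p}$ for every $1<p<\infty$; a linear change of coordinates then yields the same inequality for any constant-coefficient operator $-a_0^{ij}\partial_i\partial_j$ with the constant depending only on $n$, $p$ and the ellipticity bound $C_1$.

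Next I would localize and freeze. Fix $x_0\in\Omega$, a small radius $r$, and a cutoff $\eta$ supported in $B_r(x_0)$ with $\eta\equiv1$ on $B_{r/2}(x_0)$. Then $\eta u$ solves a constant-coefficient equation $-a^{ij}(x_0)\partial_i\partial_j(\eta u)=G$ with
\[
G=\eta f-\eta\,b^i\partial_i u+\bigl(a^{ij}(x)-a^{ij}(x_0)\bigr)\partial_i\partial_j(\eta u)+(\text{terms involving }D\eta,\,D^2\eta).
\]
Applying the model estimate and using the modulus of continuity $\omega_a$ of $a^{ij}$ to bound $\|(a^{ij}(x_0)-a^{ij})D^2(\eta u)\|_{L^p}\le C\,\omega_a(r)\|D^2(\eta u)\|_{L^p}$, then choosing $r$ small so that $C\,\omega_a(r)\le\frac12$, absorbs the second-order perturbation. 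For the drift, H\"older gives $\|\eta b^i\partial_i u\|_{L^p}\le\|b\|_{L^q}\|D(\eta u)\|_{L^{r'}}$ with $\frac1{r'}=\frac1p-\frac1q$; since $q>n$ one has $\frac1{r'}>\frac1p-\frac1n$, so the Gagliardo--Nirenberg interpolation inequality $\|D(\eta u)\|_{L^{r'}}\le\varepsilon\|D^2(\eta u)\|_{L^p}+C_\varepsilon\|\eta u\|_{L^p}$ holds with $\varepsilon$ as small as we like --- this is the one place where strict inequality $q>n$ is used --- and the same inequality swallows the $D\eta$-supported commutator terms. Absorbing the $\varepsilon$-term and summing over a finite cover of $\Omega'\subset\subset\Omega$ by such small balls gives the interior estimate; for the global estimate, smoothness of $\partial\Omega$ lets one flatten the boundary locally and invoke the half-space Calder\'on--Zygmund bound (odd reflection of $u\in W^{1,2}_0$), which patches with the interior piece over a finite boundary cover.

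The main obstacle is that $u$ is assumed only $W^{1,2}$, so the conclusion is really an elliptic \emph{regularity} statement and not merely an a priori inequality; the freezing scheme above must be preceded by a bootstrap showing $u\in W^{2,p}_{\mathrm{loc}}$. I would run this by difference quotients: since $b\in L^q$ and $u\in W^{1,2}$, the right-hand side $f-b^i\partial_i u$ lies in $L^{s_0}$ with $s_0=\min\{p,\tfrac{2q}{q+2}\}>1$, so the scheme already yields $u\in W^{2,s_0}_{\mathrm{loc}}$; Sobolev embedding then raises the integrability of $Du$, and because $q>n$ each iteration gains a fixed amount of integrability, so after finitely many steps one reaches $L^p$ (here $p<q$ is what keeps $b\cdot Du$ controlled throughout). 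Making the constants in this iteration uniform and keeping track of the shrinking domains in the covering is the only real bookkeeping; once $u\in W^{2,p}_{\mathrm{loc}}$ is in hand, the perturbation estimate finishes the proof.
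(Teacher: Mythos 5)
The paper does not prove this lemma itself but simply cites Gilbarg--Trudinger, and your outline (freezing coefficients to reduce to the Calder\'on--Zygmund model estimate, H\"older plus Gagliardo--Nirenberg interpolation to make the $L^q$-drift ($q>n$) subcritical and absorbable, a difference-quotient bootstrap to upgrade $u\in W^{1,2}$ to $W^{2,p}_{\mathrm{loc}}$, and boundary flattening with reflection for the global estimate) is precisely that standard argument. Your proposal is correct and matches the cited source's approach.
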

With the $L^p$-estimate and an iteration method, we can prove an $\epsl$-regularity lemma which will be used later for the scalar curvature equation. One  can refer to \cite{LiZhou:2017} for a detailed proof.
\begin{lem}[$\epsl$-regularity]\label{epsl-regularity}
	Over $ B_1\subset \R^n, $ consider the equation of $0\leq u\in W^{1,2}(B_1)$
	\[ -a^{ij}\ptl_i\ptl_ju+b^i\ptl_iu=f_1u+f_2u^{\frac4{n-2}}u, \]
	with $$C^{-1}_1\leq a^{ij}\leq C_1,\ \|a^{ij}\|_{C^0}+ \|b^i\|_{L^q}\leq C_2$$ for some $q>n$, and $$ \int_{B_1} |f_2|^pu^{\frac{2n}{n-2}}\diff x\leq C_3,$$  for some $\frac n2<p<q$. Then there exists a small positive constant $\epsl_0=\epsl_0(n,p,C_3)$ such that if $$\int_{B_1}|f_1|^n\diff x\leq C_4\ or\ \int_{B_1}|f_1|^p\diff x\leq \epsl_0$$ and \[ \int_{B_1}|u|^{\frac{2n}{n-2}}\diff x\leq \epsl_0 ,\] then \[ \|u\|_{W^{2,p}(B_{\frac12})}\leq C(C_1,C_2,C_3,C_4)\|u\|_{L^{\frac{2n}{n-2}}(B_1)}. \]
\end{lem}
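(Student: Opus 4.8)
The plan is to run a bootstrap/iteration argument on dyadic balls, using the $L^p$-estimate of Lemma \ref{L^p-estimate} together with the Sobolev embedding $W^{2,p}\hookrightarrow C^0$ (valid since $p>\tfrac n2$, so $W^{2,p}\hookrightarrow W^{1,q'}$ for some $q'>n$ and in particular into $C^{0,\af}$), to upgrade the a priori control on $\|u\|_{L^{2n/(n-2)}(B_1)}$ into a $W^{2,p}$-bound on the smaller ball $B_{1/2}$.

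First I would rewrite the equation in the form $-a^{ij}\ptl_i\ptl_j u+b^i\ptl_i u=Fu$ with $F=f_1+f_2u^{4/(n-2)}$, and check that $F$ lies in $L^{p}$ on small balls with a norm that is small when the radius is small. For the $f_1$ part this is immediate from either hypothesis ($\|f_1\|_{L^p}\le\epsl_0$ directly, or $\|f_1\|_{L^n}\le C_4^{1/n}$ and Hölder on a small ball gives smallness of the $L^p$-norm). For the $f_2u^{4/(n-2)}$ part I would estimate, by Hölder with exponents $p$ and $\tfrac{p}{p-1}$ applied after writing $|f_2|^p u^{4p/(n-2)}=\big(|f_2|^p u^{2n/(n-2)}\big)^{?}\cdots$ — more precisely, use that $\int |f_2 u^{4/(n-2)}|^p = \int |f_2|^p u^{4p/(n-2)} \le \big(\int |f_2|^p u^{2n/(n-2)}\big)^{\theta}\big(\int u^{2n/(n-2)}\big)^{1-\theta}$ for the appropriate $\theta\in(0,1)$ determined by $\tfrac{4}{n-2}=\theta\tfrac{2n}{n-2}+(1-\theta)\cdot 0$, i.e. $\theta=\tfrac{2}{n}$; this is where the hypothesis $n\ge 3$ and $\tfrac n2<p$ enter to keep exponents admissible. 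Hence $\|f_2u^{4/(n-2)}\|_{L^p}\lesssim C_3^{1/p}\,\|u\|_{L^{2n/(n-2)}}^{2(n-2)/(n^2)}\cdots$, so it is controlled by $C_3$ and can be made small wherever the local $L^{2n/(n-2)}$-mass of $u$ is small.

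Next, choosing $\epsl_0$ small I would apply the interior $L^p$-estimate on a pair $B_{r}\subset\subset B_{2r}$ to get $\|u\|_{W^{2,p}(B_r)}\le C\big(\|Fu\|_{L^p(B_{2r})}+\|u\|_{L^p(B_{2r})}\big)$, then use $\|Fu\|_{L^p}\le\|F\|_{L^p}\|u\|_{L^\infty}$ and Sobolev $\|u\|_{L^\infty(B_{2r})}\le C\|u\|_{W^{2,p}(B_{3r})}$. If instead one prefers to avoid $L^\infty$ one can interpolate: $\|Fu\|_{L^p}\le \|F\|_{L^p}\|u\|_{L^{\infty}}$ and bound $\|u\|_{L^\infty}$ on the larger ball by a small multiple of $\|u\|_{W^{2,p}}$ plus a large multiple of $\|u\|_{L^{2n/(n-2)}}$ via the interpolation inequality for Sobolev spaces. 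Feeding this back, with $\|F\|_{L^p}$ small (absorbed) and the $L^p(B_1)$-norm of $u$ itself bounded by $\|u\|_{L^{2n/(n-2)}(B_1)}$ up to a volume factor, a standard covering/iteration over finitely many dyadic scales closes the estimate and yields $\|u\|_{W^{2,p}(B_{1/2})}\le C(C_1,C_2,C_3,C_4)\|u\|_{L^{2n/(n-2)}(B_1)}$.

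The main obstacle is the absorption step: the nonlinear term $f_2u^{4/(n-2)}\cdot u$ is critical with respect to the Sobolev exponent, so one must be careful that the constant multiplying $\|u\|_{W^{2,p}}$ on the right — which comes from $\|F\|_{L^p}\cdot(\text{Sobolev constant})$ — is genuinely $<1$ after choosing $\epsl_0$ and a small enough radius, uniformly in the other data. This forces the choice $\epsl_0=\epsl_0(n,p,C_3)$ and requires keeping track of how the $L^p$-norm of $f_2u^{4/(n-2)}$ depends on the local $L^{2n/(n-2)}$-concentration of $u$ rather than on $\|u\|_{L^\infty}$; handling the two alternative hypotheses on $f_1$ ($L^n$-bounded versus $L^p$-small) in a unified way is a minor additional bookkeeping point. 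Once the small-energy absorption is set up correctly, the rest is the routine elliptic iteration, and I would cite \cite{LiZhou:2017} for the detailed computations.
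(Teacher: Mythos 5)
The paper does not actually prove this lemma; it refers to \cite{LiZhou:2017}. Assessed on its own terms, your proposal has a genuine gap at precisely the step you flag as ``the main obstacle,'' and it is not a bookkeeping issue. The interpolation you write for $f_2 u^{4/(n-2)}$ is false: with $\theta=2/n$, the product $\bigl(|f_2|^p u^{2n/(n-2)}\bigr)^{\theta}\bigl(u^{2n/(n-2)}\bigr)^{1-\theta}$ equals $|f_2|^{2p/n}u^{4/(n-2)+2}$, not $|f_2|^{p}u^{4p/(n-2)}$, and no $\theta\in(0,1)$ can work, because the exponent $\frac{4p}{n-2}$ of $u$ in $|f_2 u^{4/(n-2)}|^p$ strictly exceeds the exponent $\frac{2n}{n-2}$ that the hypothesis controls (since $p>\frac n2$), and H\"older only interpolates downward. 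So $\|f_2 u^{4/(n-2)}\|_{L^p}$ is not controlled by the hypotheses without already knowing $\|u\|_{L^\infty}$, which is circular. Your fallback via $\|Fu\|_{L^p}\le\|F\|_{L^p}\|u\|_{L^\infty}$ inherits the same circularity: after $\|f_2 u^{4/(n-2)}\|_{L^p}\le C_3^{1/p}\|u\|_{L^\infty}^{(4p-2n)/(p(n-2))}$, the absorption inequality carries $\|u\|_{W^{2,p}}$ to total power $\frac{p(n+2)-2n}{p(n-2)}>1$, so it is superlinear; an estimate of the form $X\le a+bX^{\gamma}$ with $\gamma>1$ has a large-solution branch that smallness of $\epsl_0$ and $\|f_1\|$ alone never excludes.

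The exponent that does interpolate from the hypotheses is $n/2$, not $p$. Since $|f_2 u^{4/(n-2)}|^{n/2}=|f_2|^{n/2}u^{2n/(n-2)}$ carries exactly the controlled power of $u$, H\"older with weights $\frac{n}{2p}$ and $1-\frac{n}{2p}$ gives
\[ \bigl\|f_2 u^{\frac{4}{n-2}}\bigr\|_{L^{n/2}(B_1)}\le\Bigl(\int_{B_1}|f_2|^p u^{\frac{2n}{n-2}}\Bigr)^{\frac1p}\Bigl(\int_{B_1}u^{\frac{2n}{n-2}}\Bigr)^{\frac2n-\frac1p}\le C_3^{1/p}\epsl_0^{\frac2n-\frac1p}, \]
which is small because $\frac2n-\frac1p>0$. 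But $L^{n/2}$ is critical for the potential, so one cannot feed this into the $L^p$-estimate directly; the missing ingredient is a Moser (Brezis--Kato) iteration: test the equation against $\eta^2 u^{2s-1}$, use the small $L^{n/2}$-norm of $V=f_1+f_2 u^{4/(n-2)}$ together with the Sobolev inequality to absorb $\int V(\eta u^s)^2$, and bootstrap from $u\in L^{2n/(n-2)}$ to $u\in L^\infty$ with $\|u\|_{L^\infty(B_{3/4})}\le C\|u\|_{L^{2n/(n-2)}(B_1)}$. Only after that is the right-hand side of the equation genuinely in $L^p$ on an interior ball, and Lemma \ref{L^p-estimate} then gives the stated $W^{2,p}$-bound. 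In short, the $\epsl_0$-smallness of $\int u^{2n/(n-2)}$ must be routed through $L^{n/2}$-smallness of the potential and Moser iteration, not through $L^p$-smallness of the potential and a single elliptic absorption.
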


\subsection{Harmonic radius and compactness}
Recall that we say a sequence of pointed complete Riemannian manifolds $$(M_i,g_i,p_i)\to (M,g,p)$$ in the $C^{m,\af}$-topology with $m\geq 0$ and $\af\in(0,1)$, if for every $r>0$ there exists a domain $B_r^g(p)\subset \Omg\subset M$ and embeddings $f_i:\Omg\to M_i$ for all large $i$ such that $B_r^{g_i}(p_i)\subset f_i(\Omg)$ and $f_i^{-1}(p_i)\to p$, $f_i^*g_i\to g$ on $\Omg$ in the $C^{m,\af}$-topology. Similarly we have the $W^{k,p}$-convergence. Note that the $C^{\af}$-convergence of metrics ensures the convergence of distance and thus convergence of pointed geodesic balls together with their volumes. 

For a Riemannian $n$-manifold $(M,g)$, and a constant $q>n$, we say that the $W^{1,q}$-harmonic norm on the scale of $r$ of $A\subset (M,g)$: $$\|A\subset(M,g)\|_{W_{har}^{1,q},r}\leq Q,$$
if we can find charts $$\fai_s:D_r(0)\subset \R^n\to U_s\subset M,$$
such that
\begin{itemize}
	\item[1)] Every ball $B_{\frac1{10}e^{-Q}r}^g(p)$, $p\in A$ is contained in some $U_s$;
	\item[2)] $|D\fai_s|\leq e^Q$ on $D_r(0)$ and $|D\fai_s^{-1}|\leq e^Q$ on $U_s$;
	\item[3)] $r^{1-\frac nq}\|Dg_{s\cdot\cdot}\|_{L^q}\leq Q$;
	\item[4)] $\fai_s^{-1}:U_s\to \R^n$ is harmonic with respect to the metric $(M,g)$.
\end{itemize}
The $W^{1,q}$-harmonic radius $\epsl(x)$ at $x$ is the maximal radius satisfying above conditions around $x$, and the $W^{1,q}$-harmonic radius of $(M,g)$ is the minimum of $\epsl(x)$ for all $x\in M$. Similarly we have the $C^{1,\af}$-harmonic coordinate and radius. Then we have the Gromov compactness theorem. For a proof and the related results see e.g. \cite{Anderson:1990, AndersonCheeger:1992, HebeyHerzlich:1997}.
\begin{lem}\label{compactness}
	For given constants $Q>0, n\geq 2, q>n$ and $r_0>0$, then the class of complete pointed $n$-dimensional Riemannian manifolds $(M,g,p)$ with $\|(M,g)\|_{W^{1,q}_{har},r_0}\leq Q$ is closed in the pointed $W^{1,q}$-topology and compact in the pointed $C^{\af}$-topology for all $\af<1-\frac nq$.
\end{lem}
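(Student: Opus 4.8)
The plan is to run the classical Arzel\`a--Ascoli-plus-gluing argument for harmonic atlases, adapted to the $W^{1,q}$ setting; the one structural input is that $q>n$, so by Morrey's theorem $W^{1,q}\hookrightarrow C^{0,1-n/q}$ and the embedding is compact into $C^{0,\af}$ for every $\af<1-n/q$. First I would extract the local data controlled by the harmonic norm bound. By condition 2) the charts $\fai_s$ are uniformly bi-Lipschitz, so on $D_{r_0}(0)$ the metric components $g_{sij}$ are uniformly elliptic with constant depending only on $Q$; by condition 3) and Sobolev they are uniformly bounded in $C^{0,\af}(D_{r_0}(0))$ and in $W^{1,q}(D_{r_0}(0))$. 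Since $\fai_s^{-1}$ is $g$-harmonic, the coordinate functions solve the divergence-form equation $\ptl_j(\sqrt{\det g_s}\,g_s^{jk})=0$, so any overlap map $\fai_t^{-1}\circ\fai_s$ has $g_s$-harmonic components and Lemma \ref{L^p-estimate} together with Sobolev embedding promotes it to a uniform $C^{1,\af}$ bound. Finally the bi-Lipschitz control gives a uniform local volume estimate $\Vol(B^g_\rho(x))\le C\rho^n$ for $\rho\le\tfrac1{10}e^{-Q}r_0$, hence a uniform bound $\Vol(B^g_r(x))\le C(n,Q,r_0,r)$ and a uniform bound $N(r)$ on the number of disjoint $\tfrac1{20}e^{-Q}r_0$-balls inside any metric $r$-ball.

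Given a sequence $(M_i,g_i,p_i)$ in the class and a fixed $r>0$, I would choose for each $i$ a maximal $\tfrac1{20}e^{-Q}r_0$-separated net in $B^{g_i}_r(p_i)$; its cardinality is $\le N(r)$ and the associated harmonic charts cover $B^{g_i}_r(p_i)$. Passing to a subsequence one may assume the overlap pattern of the net is the same for all $i$, that in each chart the metric components converge in $C^{0,\af}$ and weakly in $W^{1,q}$, and that each overlap map converges in $C^{1,\af}$. The limit overlap maps inherit the cocycle identities by continuity, so gluing $N(r)$ copies of $D_{r_0}(0)$ along them produces a manifold chunk $M^\infty_r$, Hausdorff by the uniform bi-Lipschitz bounds, carrying the limit metric $g_\infty\in C^{0,\af}\cap W^{1,q}$; comparing the two atlases chart by chart gives embeddings $f_i$ of a neighbourhood of $B^{g_\infty}_r(p_\infty)$ into $M_i$ with $f_i^*g_i\to g_\infty$ in $C^{0,\af}$, which is the asserted $C^\af$-subconvergence on the scale $r$. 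A further diagonal argument in $r\to\infty$ produces a complete pointed limit $(M_\infty,g_\infty,p_\infty)$ — completeness because every metric ball in $M_\infty$ is covered by finitely many charts of definite size $r_0$, so bounded sets are relatively compact — giving compactness in the pointed $C^\af$-topology.

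For closedness in the pointed $W^{1,q}$-topology, one checks that the limit again satisfies $\|(M_\infty,g_\infty)\|_{W^{1,q}_{har},r_0}\le Q$. Conditions 1)--2) pass to the bi-Lipschitz limits of the charts; condition 3) survives because $\|Dg_{s\cdot\cdot}\|_{L^q}$ is lower semicontinuous under $W^{1,q}$-weak convergence; and condition 4) holds because $\sqrt{\det g_{s,i}}\,g_{s,i}^{jk}$ converges weakly in $W^{1,q}$ to $\sqrt{\det g_{s,\infty}}\,g_{s,\infty}^{jk}$, so the distributional identity $\ptl_j(\sqrt{\det g_{s,i}}\,g_{s,i}^{jk})=0$ passes to the limit and $\fai_{s,\infty}^{-1}$ is $g_\infty$-harmonic. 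The same remark shows the $C^\af$-convergent subsequence of the previous paragraph also converges weakly in $W^{1,q}$, so its limit lies in the class, which is what ``compact'' requires.

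I expect the main obstacle to be organizational rather than analytic: carrying the gluing data coherently through the diagonal extraction — fixing the overlap combinatorics uniformly in $i$, verifying that the limit overlap maps form a genuine manifold atlas (Hausdorffness and the cocycle condition), and patching the local comparison maps $\fai_{s,i}\circ\fai_{s,\infty}^{-1}$ into globally defined embeddings $f_i$ — together with the bookkeeping needed to let $r\to\infty$ while retaining the limits built at earlier scales. The analytic ingredients, namely the Sobolev/Morrey embeddings, Rellich compactness for the $W^{1,q}$ bound, and the $W^{2,q}$-type estimate of Lemma \ref{L^p-estimate} applied to the harmonic overlap maps, are standard and used only locally.
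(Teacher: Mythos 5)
The paper does not prove this lemma itself --- it cites \cite{Anderson:1990, AndersonCheeger:1992, HebeyHerzlich:1997} --- and your sketch reproduces the standard harmonic-atlas Arzel\`a--Ascoli-plus-gluing argument used in those references (uniform bi-Lipschitz chart control, Morrey/Rellich compactness from the $W^{1,q}$ bound, elliptic regularity via Lemma \ref{L^p-estimate} for the transition maps, gluing along a stabilized overlap combinatorics, diagonalizing in $r$, and lower semicontinuity of the norm for closedness), so it is essentially the same approach. The one place where the wording understates the work is the step ``hence a uniform bound $\Vol(B^g_r(x))\le C(n,Q,r_0,r)$'': the local bound $\Vol(B^g_\rho)\le C\rho^n$ alone does not give this; one also needs the bi-Lipschitz lower volume bound $\Vol(B^g_\rho)\ge c\rho^n$ together with a chaining/packing argument on a $\tfrac{\rho_0}{2}$-net to control both $\Vol(B^g_r)$ and $N(r)$, which is standard and does not affect the rest of the proof.
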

For later use let us recall some basic lemmas in \cite{Anderson:1990}, where a blowup argument and splitting theorem are used to show that a lower bound of injectivity radius implies a lower bound of harmonic radius.
\begin{lem}\label{injectivity}
	If $(M_i,g_i,p_i)$ are $n$-dimensional Riemannian manifolds with $$|\Ric_{g_i}|\leq \Lbd,$$ and for any $r>0$ and all large $i$ the injectivity satisfying $\inj_{g_i}B_r(p_i)\geq i_0(r)>0$, then there is a subsequence converges in the pointed $C^{1,\af}$-topology for any $0<\af<1$.
\end{lem}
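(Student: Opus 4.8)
The plan is the standard harmonic-radius blow-up argument, as in \cite{Anderson:1990}. It is enough to show that for each fixed $r>0$ there is a constant $\rho(r)>0$ and a fixed $q>n$ so that the $W^{1,q}$-harmonic radius of $(M_i,g_i)$ is $\ge\rho(r)$ on $B^{g_i}_r(p_i)$ for all large $i$. Granting this, Lemma~\ref{compactness} together with a diagonal argument over $r\to\infty$ yields a subsequence converging in the pointed $C^{\af}$-topology; and since $|\Ric_{g_i}|\le\Lbd$, in any harmonic chart the metric coefficients solve an elliptic system of the schematic form $\Delta_{g}g_{ab}=-2\Ric_{ab}+Q_{ab}(g,\ptl g)$, so Lemma~\ref{L^p-estimate} promotes the uniform $W^{1,q}$-bounds to uniform $W^{2,q}$-bounds, and the convergence may then be taken in the pointed $C^{1,\af}$-topology for every $0<\af<1$. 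So the entire content is the harmonic-radius lower bound.

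I would prove that bound by contradiction. If it fails on some $B^{g_i}_r(p_i)$, then, using that closed metric balls are compact and the harmonic radius is continuous, I select (after passing to a subsequence) points $q_i\in\overline{B^{g_i}_r(p_i)}$ realizing the infimum of the harmonic radius over $B^{g_i}_r(p_i)$, with $\lbd_i:=r_{\mathrm{harm}}^{g_i}(q_i)\to 0$, and rescale $\tilde g_i:=\lbd_i^{-2}g_i$. Then the harmonic radius of $\tilde g_i$ at $q_i$ equals $1$ and is $\ge 1$ throughout $B^{\tilde g_i}_{\lbd_i^{-1}r}(q_i)$, a ball whose radius tends to $\infty$. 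Also $|\Ric_{\tilde g_i}|=\lbd_i^{2}|\Ric_{g_i}|\le\lbd_i^{2}\Lbd\to 0$, and --- since $q_i$ stays in the \emph{fixed} ball $B^{g_i}_r(p_i)$, so that $B^{\tilde g_i}_R(q_i)\subset B^{g_i}_{2r}(p_i)$ for $i$ large --- the hypothesis gives $\inj_{\tilde g_i}(x)=\lbd_i^{-1}\inj_{g_i}(x)\ge\lbd_i^{-1}i_0(2r)\to\infty$, uniformly over $x\in B^{\tilde g_i}_R(q_i)$ for each fixed $R$.

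By Lemma~\ref{compactness} and the elliptic upgrade above, a subsequence of $(M_i,\tilde g_i,q_i)$ converges in the pointed $C^{1,\af}$-topology to a complete pointed Riemannian manifold $(M_\infty,g_\infty,q_\infty)$; bootstrapping the Ricci equation shows $g_\infty$ is smooth with $\Ric_{g_\infty}\equiv 0$, while by the stability of harmonic coordinates under $C^{1,\af}$-convergence the harmonic radius of $g_\infty$ at $q_\infty$ equals $1$. On the other hand, lower semicontinuity of the injectivity radius under $C^{1,\af}$-convergence, together with $\inj_{\tilde g_i}\to\infty$ on fixed balls, gives $\inj_{g_\infty}\equiv+\infty$; hence $\exp_x$ is a diffeomorphism for every $x\in M_\infty$, so $M_\infty$ has empty cut locus, and consequently every complete geodesic of $M_\infty$ is minimizing on each of its subintervals, i.e.\ is a line. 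Since $\Ric_{g_\infty}=0\ge 0$, the Cheeger--Gromoll splitting theorem then yields an isometric splitting $M_\infty\cong\R\times N^{n-1}$ with $N$ complete, Ricci-flat, and --- inheriting the lines of $M_\infty$ --- again with empty cut locus; iterating identifies $(M_\infty,g_\infty)$ with flat $\R^n$, which has infinite harmonic radius. This contradicts the harmonic radius of $g_\infty$ at $q_\infty$ being $1$, and the contradiction proves the lemma.

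Two points need care: verifying, via the elliptic equation for $g$ in harmonic coordinates and Lemma~\ref{L^p-estimate}, that a harmonic-radius lower bound really upgrades the convergence to $C^{1,\af}$ for all $\af<1$ (routine, but this is the step where the Ricci bound buys regularity); and --- what I expect to be the main obstacle --- showing that the a priori only $C^{1,\af}$-convergence is strong enough to pass both $\Ric_{g_i}\to0$ and $\inj_{\tilde g_i}\to\infty$ to the limit, the latter amounting to lower semicontinuity of the injectivity radius, which is delicate because conjugate points are a second-order phenomenon whereas the limit metric is only $C^{1,\af}$. Both are classical and are handled as in \cite{Anderson:1990}.
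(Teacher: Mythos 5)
The paper gives no proof of this lemma: it quotes it from \cite{Anderson:1990} and says the argument there is ``a blowup argument and splitting theorem,'' which is exactly the strategy you adopt. So your outline matches the cited proof, and the substantive reduction (harmonic radius lower bound $\Rightarrow$ $W^{2,p}$ bound via the Ricci equation in harmonic charts $\Rightarrow$ pointed $C^{1,\af}$ compactness) and the limit analysis (pass $\Ric\to 0$ and $\inj\to\infty$ to the limit, bootstrap to smoothness, use Cheeger--Gromoll iteratively to identify the blow-up with $\R^n$) are all correct in substance and your caveats are the right ones.

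There is, however, one concrete gap in the point selection. You take $q_i\in\overline{B^{g_i}_r(p_i)}$ minimizing the harmonic radius over $B^{g_i}_r(p_i)$ and then assert that, after rescaling by $\lbd_i=r_{\mathrm{harm}}^{g_i}(q_i)$, the harmonic radius is $\ge 1$ throughout $B^{\tilde g_i}_{\lbd_i^{-1}r}(q_i)$. But the minimization was over the ball centered at $p_i$, not at $q_i$; if $q_i$ sits near $\ptl B^{g_i}_r(p_i)$ then $B^{g_i}_r(q_i)$ sticks out of $B^{g_i}_r(p_i)$, where you have no lower bound on the harmonic radius, so the claim fails. The standard fix (as in \cite{Anderson:1990,AndersonCheeger:1992}) is to work on $B_{2r}(p_i)$ and minimize the \emph{weighted} quantity $f_i(q)=r_{\mathrm{harm}}^{g_i}(q)/d_{g_i}(q,\ptl B_{2r}(p_i))$. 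If the harmonic radius degenerates on $B_r(p_i)$ then $\inf f_i\to 0$; taking $q_i$ to realize this infimum, one checks that for all $q$ with $d_{g_i}(q,q_i)<\tfrac12 d_{g_i}(q_i,\ptl B_{2r})$ one has $r_{\mathrm{harm}}(q)\ge\tfrac12 r_{\mathrm{harm}}(q_i)$, while $d_{g_i}(q_i,\ptl B_{2r})/r_{\mathrm{harm}}(q_i)=1/f_i(q_i)\to\infty$. After rescaling by $\lbd_i=r_{\mathrm{harm}}(q_i)$ this yields harmonic radius $\ge\tfrac12$ on $\tilde g_i$-balls about $q_i$ of radius tending to infinity, which is exactly what your blow-up argument needs; the rest of your proof then goes through unchanged, since $q_i\in B_{2r}(p_i)$ keeps the injectivity-radius hypothesis available.
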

\begin{lem}\label{local-compactness}
	For a sequence of pointed complete $n$-dimensional Riemannian manifolds $(M_i,g_i,p_i)$ which converges in the Gromov Hausdorff topology to a pointed metric space $(M_\infty,d_\infty,p_\infty)$. Suppose that $$|\Ric_{g_i}|\leq \Lbd$$ and the injectivity radius of $(M_i,g_i)$ over $\bar B_{r_2}^{g_i}(p_i)\setminus B_{r_1}^{g_i}(p_i)$ for any positive $r_1<r_2$ is lower bounded by a uniform constant $i_0(r_1,r_2)>0$. Then $M_\infty\setminus\{p_\infty\}$ is a smooth manifold with $C^{1,\af}$-Riemannian metric $g_\infty$ for all $0<\af<1$,  which is compatible with the distance $d_\infty$, and also $$(M_i,g_i)\to (M_\infty,g_\infty)\ in\ C^{1,\af}_{loc}(M_\infty\setminus\{p_\infty\}).$$
\end{lem}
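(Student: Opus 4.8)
The plan is to reduce everything to Lemma~\ref{compactness} applied on annular regions centered at $p_\infty$, where the injectivity hypothesis is available, and then to diagonalize over an exhaustion of $M_\infty\setminus\{p_\infty\}$ by such annuli; the only input beyond the quoted lemmas is a \emph{local} harmonic radius estimate, which I would take from Anderson's blowup-and-splitting argument --- the same argument underlying Lemma~\ref{injectivity} --- in the form: if $|\Ric|\leq\Lbd$ on $B_{i_0}(x)$ and $\inj(x)\geq i_0$, then the $C^{1,\af}$-harmonic radius at $x$ is at least $\rho(n,\Lbd,i_0)>0$.

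First I would fix $0<a<b<R$, where $R:=\sup_{x\in M_\infty}d_\infty(p_\infty,x)\in(0,\infty]$, and consider the annuli $\Omg_i(a,b):=\bar B^{g_i}_b(p_i)\setminus B^{g_i}_a(p_i)\subset M_i$. For all large $i$ the Gromov--Hausdorff convergence gives that the $g_i$-distance of $p_i$ to a point lying near $x\in M_\infty$ is within $o(1)$ of $d_\infty(p_\infty,x)$, so on a fixed slightly larger annulus one has $|\Ric_{g_i}|\leq\Lbd$ and $\inj_{g_i}\geq i_0=i_0(a,b)>0$. Feeding this into the local harmonic radius estimate yields a uniform bound $\|\Omg_i(a,b)\subset(M_i,g_i)\|_{C^{1,\af}_{har},\rho}\leq Q$ with $\rho,Q$ depending only on $n,\Lbd,a,b$.

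Next I would apply Lemma~\ref{compactness} to these harmonic regions. Choosing $a_k\downarrow 0$, $b_k\uparrow R$, base points $q_k\in M_\infty$ with $d_\infty(p_\infty,q_k)=(a_k+b_k)/2$ and $q_{k,i}\in M_i$ converging to $q_k$, the pointed regions $(\Omg_i(a_k,b_k),g_i,q_{k,i})$ subconverge in $C^{1,\af}$; a diagonal subsequence in $i$ produces a single $C^{1,\af}$ Riemannian manifold $(N,g_\infty)$ --- in fact $C^{2,\af}$, hence smoothable, since harmonic transition maps gain a derivative --- obtained by gluing the nested limits, together with embeddings $f_i\colon K\to M_i$ for each compact $K\subset N$ with $f_i^*g_i\to g_\infty$ in $C^{1,\af}(K)$, i.e. $(M_i,g_i)\to(N,g_\infty)$ in $C^{1,\af}_{loc}$. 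To identify $(N,g_\infty)$ with $M_\infty\setminus\{p_\infty\}$ I would compare two Gromov--Hausdorff limits of the metric spaces $(\Omg_i(a,b),d_{g_i})$: the Gromov--Hausdorff approximations of $(M_i,g_i)$ restrict (using that $C^{\af}$/GH convergence carries metric balls to metric balls) to show these converge to $\bar B^{d_\infty}_b(p_\infty)\setminus B^{d_\infty}_a(p_\infty)$ with its restricted distance, while $C^{1,\af}$-convergence of $f_i^*g_i$ forces uniform convergence of length distances on compacts, hence GH-convergence to the corresponding region of $(N,d_{g_\infty})$. Uniqueness of GH limits gives isometries of these regions, compatible as $(a,b)$ varies, hence a bijection $\Phi\colon N\to M_\infty\setminus\{p_\infty\}$. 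Finally, any path of length $<a/2$ from a point $x$ with $d_\infty(p_\infty,x)>a$ stays in $\{d_\infty(\cdot,p_\infty)>a/2\}$, so near such $x$ the restricted distance, the intrinsic distance of the annulus, and the length distance of $g_\infty$ all coincide; thus $\Phi$ is a local isometry, $g_\infty$ induces the subspace topology on $M_\infty\setminus\{p_\infty\}$ and its length metric agrees with $d_\infty$ locally, and transporting the differentiable structure through $\Phi$ finishes the argument.

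The hard part will be the localization underlying the first two steps: the Gromov--Hausdorff hypothesis controls nothing near $p_\infty$, so Lemma~\ref{injectivity} cannot be invoked verbatim and one must genuinely extract a local harmonic radius bound on annuli from the pointwise injectivity hypothesis --- this is exactly where Anderson's blowup-and-splitting estimate does the work, and care is needed to keep $\rho$ and $Q$ uniform in $i$. A secondary, more bookkeeping obstacle is the last step: matching the abstract limit $M_\infty$ with the concrete $C^{1,\af}$-limit $N$ away from the puncture, where one must check that near-minimizing paths between nearby points of an annulus cannot approach $p_\infty$, so that the various notions of distance agree and $\Phi$ is truly a local isometry of length spaces.
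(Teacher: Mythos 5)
Your proposal is correct and follows essentially the same route as the paper's source: the paper states this lemma without proof, quoting it from \cite{Anderson:1990}, and Anderson's argument is exactly your scheme --- a local harmonic-radius lower bound in terms of $\inj$ and $\Lbd$ obtained from the blowup-and-splitting argument, the compactness Lemma \ref{compactness} applied on annuli, a diagonal argument over an exhaustion, and identification of the resulting $C^{1,\af}$ limit with $M_\infty\setminus\{p_\infty\}$ via uniqueness of Gromov--Hausdorff limits. The two points you flag as delicate (uniformity of the harmonic radius on annuli, and the agreement of restricted, intrinsic, and $g_\infty$-length distances away from the puncture) are indeed the only places requiring care, and your treatment of both is adequate.
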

Also by the similar blowup argument, there is a gap lemma in \cite{Anderson:1990}.
\begin{lem}\label{gap-lem-volume}
	Let $(M,g)$ be a complete Ricci flat $n$-manifold. There is an $\epsl=\epsl(n)>0$ such that if $$\frac{\Vol_gB_r}{\omg_nr^n}\geq 1-\epsl,$$then $M$ is isometric to $\R^n$.
\end{lem}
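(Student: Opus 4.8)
The plan is to deduce flatness of $(M,g)$ from an $\epsl$-regularity property of Ricci-flat metrics, and then to classify complete flat manifolds with almost-maximal volume growth. Throughout I read the hypothesis as holding for every radius $r$; by the Bishop--Gromov inequality the ratio $\Vol_gB_r(q)/(\omg_nr^n)$ is non-increasing in $r$ with limiting value $1$ as $r\to 0$, so this is the same as saying the asymptotic volume ratio $\Vol_gM:=\lim_{r\to\infty}\Vol_gB_r(q)/(\omg_nr^n)$ (which does not depend on $q$) is $\geq 1-\epsl$. Hence
\[ 1-\epsl\ \leq\ \frac{\Vol_gB_r(q)}{\omg_nr^n}\ \leq\ 1\qquad\text{for every }q\in M,\ r>0, \]
so the volume hypothesis is scale invariant and centre independent. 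Granting that $(M,g)$ is flat, it is isometric to $\R^n/\Gm$ for a subgroup $\Gm\subset\Iso(\R^n)$ acting freely and properly discontinuously; if $\Gm\neq\{1\}$ then $\Gm$ is torsion free (a finite-order Euclidean isometry has a fixed point) and infinite, so $\R^n/\Gm$ has polynomial volume growth of degree $\leq n-1$ and $\Vol_gM=0<1-\epsl$. Thus $\Gm=\{1\}$ and $M=\R^n$, so the whole problem reduces to proving $(M,g)$ is flat.

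The key input is: there exist $\epsl_0(n)>0$ and $C(n)>0$ such that if $(N,h)$ is Ricci flat with $\Vol_hB_r^h(x)\geq(1-\epsl_0)\omg_nr^n$, then $|Rm_h|\leq C(n)r^{-2}$ on $B_{r/2}^h(x)$. Taking $\epsl\leq\epsl_0$ in the lemma and applying this to $(M,g)$ at a fixed centre and at \emph{every} radius $r>0$, we get $|Rm_g(q)|\leq C(n)r^{-2}\to 0$; hence $Rm_g\equiv 0$ and $(M,g)$ is flat, as needed.

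The $\epsl$-regularity statement is itself proved by a blow-up argument. If it were false there would be Ricci-flat $(N_i,h_i)$, points $x_i$ and radii $r_i$ with $\Vol_{h_i}B_{r_i}^{h_i}(x_i)\geq(1-\tfrac1i)\omg_nr_i^n$ but $\sup_{B_{r_i/2}^{h_i}(x_i)}|Rm_{h_i}|\,r_i^2\to\infty$. A standard point-selection (Hamilton's trick) yields new centres $y_i$ and scales $\rho_i$ with $\rho_i/r_i\to 0$ such that the rescaled pointed manifolds $(N_i,\rho_i^{-2}h_i,y_i)$ are Ricci flat, have $|Rm|\leq 2$ on metric balls whose radii tend to $\infty$, and have $|Rm|=1$ at $y_i$; moreover, by scale invariance of the volume ratio they remain volume non-collapsed with ratios tending to $1$. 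Bounded curvature together with non-collapsing gives a uniform lower bound on the $C^{1,\af}$-harmonic radius, and elliptic regularity for the Ricci-flat system in harmonic coordinates upgrades this to uniform local $C^\infty$ bounds; so by Lemma \ref{compactness} a subsequence converges in $C^\infty_{loc}$ to a complete Ricci-flat limit $(N_\infty,h_\infty,y_\infty)$ with $|Rm_{h_\infty}(y_\infty)|=1$, hence not flat. But volume convergence gives $\Vol_{h_\infty}B_r(y_\infty)\geq\omg_nr^n$ for all $r$, i.e.\ $\Vol_{h_\infty}N_\infty=1$; by the rigidity case of the Bishop--Gromov inequality this forces $N_\infty$ to be isometric to $\R^n$, contradicting non-flatness.

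The main obstacle is this last blow-up step: the point-selection must be arranged so that after rescaling the curvature is uniformly bounded (without it there is no smooth limit and the argument collapses), and the identification of the limit relies on the sharp rigidity in Bishop--Gromov; keeping track of the non-collapsing through the rescaling, and checking that it passes to the limit, is what makes the contradiction go through.
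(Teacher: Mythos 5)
The paper does not prove this lemma; it is cited from Anderson's 1990 paper with the remark ``by the similar blowup argument'', so there is no in-paper proof to compare against. Your overall strategy (reduce to flatness, classify flat manifolds with almost-maximal volume growth, and obtain flatness by a blow-up/point-selection contradiction ending in Bishop--Gromov rigidity) is the right one and is essentially Anderson's.

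There is, however, a genuine gap in the step you yourself flag as the main obstacle. In your proof of the single-ball $\epsl$-regularity statement, after Hamilton's point-selection you pass from the original center $x_i$ to a new center $y_i$ and a new scale $\rho_i$, and you assert that the rescaled manifolds ``remain volume non-collapsed with ratios tending to $1$'' \emph{by scale invariance of the volume ratio}. Scale invariance only handles the change of radius from $r_i$ to $\rho_i$; it says nothing about the change of center from $x_i$ to $y_i$. The hypothesis in your $\epsl$-regularity statement is a volume lower bound for the single ball $B_{r_i}(x_i)$, and a naive application of Bishop--Gromov from $y_i$ outward only gives $\Vol B_{r_i/4}(y_i)\geq (1-1/i)2^{-n}\omg_n(r_i/4)^n$, i.e.\ non-collapsing but with a ratio far from $1$. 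To get the rescaled ratio at $y_i$ to tend to $1$ one needs to propagate the almost-Euclidean volume condition to interior points at smaller scales, which is Colding's almost volume rigidity theorem (or an equivalent almost-rigidity form of Bishop--Gromov); this is a substantive input you do not supply, and without it the Bishop--Gromov rigidity step at the end of the blow-up does not apply because the limit need not have volume ratio $\equiv 1$.

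The fix is to bypass the single-ball $\epsl$-regularity altogether, which is actually a stronger and harder statement than you need. You already reduced, via monotonicity and center-independence of the asymptotic volume ratio, to the case where $\Vol_g B_r(q)/(\omg_n r^n)\geq 1-\epsl$ for \emph{every} $q$ and $r$. Run the contradiction directly on the lemma: take complete Ricci-flat non-flat $(M_i,g_i)$ with volume ratio $\geq 1-1/i$ at every center and scale, point-select $q_i,\rho_i$ with $|Rm(q_i)|=\rho_i^{-2}$ and $|Rm|\leq 4\rho_i^{-2}$ on $B_{A_i\rho_i}(q_i)$ with $A_i\to\infty$, and rescale. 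Because the volume hypothesis holds at all centers and scales, it survives the change of basepoint and the rescaling verbatim, so the rescaled sequence has volume ratio $\geq 1-1/i$ everywhere, bounded curvature on exhausting balls, and hence (via injectivity radius lower bounds and Lemmas \ref{compactness}, \ref{injectivity} with elliptic regularity in harmonic coordinates) converges in $C^\infty_{loc}$ to a complete Ricci-flat $(N_\infty,h_\infty,q_\infty)$ with $|Rm(q_\infty)|=1$ but volume ratio $\equiv 1$, contradicting Bishop--Gromov rigidity. This keeps the spirit of your argument but closes the hole without invoking Colding.
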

\subsection{Sobolev inequalities and volume ratios}
Recall that for a domain $\Omg$ in a Riemnnian manifold $(M,g)$, we have the Dirichlet-Sobolev functional for $\fai\in W^{1,2}_{0}(\Omg,g)$ $$SD(\Omg,\fai):=\frac{\int_{B_R(p)}|\nabla_g\fai|^2\dvol_g}{\left(\int_{B_R(p)}\fai^{\frac{2n}{n-2}}\dvol_g\right)^{\frac{n-2}n}},$$
and the Dirichlet-Sobolev constant $$SD(\Omg,g):=\inf_{\substack{\fai\in W^{1,2}_0(\Omg)\\ \fai\neq 0}}SD(\Omg,\fai).$$
We will need a fact that Sobolev inequality gives a volume growth control.
\begin{lem}\label{volume-ratio}
	On a complete Riemannian manifold $(M,g)$, for a geodesic ball $B_R^g(p)$ with $\ptl B_R^g(p)\neq \emptyset$ if there exists a positive $\mu>0$ such that the Dirichlet Sobolev constant $SD(B_R^g(p),g)\geq\mu$,
	then there exists $C(n,\mu)>0$ such that for any $0<r\leq R$ $$\frac{\Vol_gB^g_r(p)}{\omg_nr^n}\geq C(n,\mu).$$
\end{lem}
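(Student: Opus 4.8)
The plan is to feed the Dirichlet--Sobolev inequality a one-parameter family of Lipschitz ``tent'' functions built from the distance to $p$; this turns the bound $SD(B_R^g(p),g)\geq\mu$ into a recursive inequality for the volume function $V(\rho):=\Vol_gB_\rho^g(p)$ relating the scales $\rho$ and $\rho/2$, and iterating this recursion down to scale $0$ --- where $V(\rho)/\rho^n\to\omg_n$ by smoothness of $g$ --- pins down $V(r)/(\omg_n r^n)$ from below by a constant depending only on $n$ and $\mu$.

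First I would fix $0<\rho<R$ and take the test function $\fai_\rho:=(\rho-d_g(p,\cdot))_+$. Since $(M,g)$ is complete, closed balls are compact, so $\fai_\rho$ is Lipschitz with compact support contained in the \emph{open} ball $B_R^g(p)$ (this is where $\rho<R$ enters), hence $\fai_\rho\in W^{1,2}_0(B_R^g(p))$; moreover $|\nabla_g\fai_\rho|=1$ a.e.\ on $B_\rho^g(p)$ and $=0$ off it, while $\fai_\rho\geq\rho/2$ on $B_{\rho/2}^g(p)$. Inserting $\fai_\rho$ into $SD(B_R^g(p),g)\geq\mu$ then gives
$$\mu\Big(\tfrac{\rho}{2}\Big)^2 V(\rho/2)^{\frac{n-2}{n}}\;\leq\;\mu\Big(\int\fai_\rho^{\frac{2n}{n-2}}\dvol_g\Big)^{\frac{n-2}{n}}\;\leq\;\int|\nabla_g\fai_\rho|^2\dvol_g\;=\;V(\rho),$$
that is, $V(\rho)\geq\frac{\mu}{4}\rho^2 V(\rho/2)^{\frac{n-2}{n}}$ for every $0<\rho<R$.

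Next I would fix $r<R$, set $r_k:=2^{-k}r$ and $\theta:=\frac{n-2}{n}\in(0,1)$, and take logarithms in the recursion applied at $\rho=r_k$: with $v_k:=\log V(r_k)$ it reads $v_k\geq\theta v_{k+1}+\big(\log\tfrac{\mu}{4}+2\log r-2k\log 2\big)$, so after $N$ iterations
$$v_0\;\geq\;\sum_{k=0}^{N-1}\theta^k\big(\log\tfrac{\mu}{4}+2\log r-2k\log 2\big)\;+\;\theta^N v_N.$$
Since $g$ is smooth there is $r_*>0$ with $V(\rho)\geq\tfrac12\omg_n\rho^n$ for $0<\rho\leq r_*$, hence for $N$ large $\theta^N v_N\geq\theta^N\big(\log\tfrac{\omg_n}{2}+n\log r\big)-n(\log 2)\,N\theta^N\to 0$; letting $N\to\infty$ and summing the absolutely convergent series (using $\sum_{k\geq0}\theta^k=\frac n2$ and $\sum_{k\geq0}k\theta^k=\frac{n(n-2)}{4}$, both coming from $1-\theta=\frac2n$) gives $\log V(r)\geq n\log r+\tfrac n2\log\tfrac\mu4-\tfrac{n(n-2)}{2}\log 2$, that is
$$\frac{\Vol_gB_r^g(p)}{\omg_n r^n}\;\geq\;\frac{1}{\omg_n}\Big(\frac{\mu}{4}\Big)^{n/2}2^{-\frac{n(n-2)}{2}}\;=:\;C(n,\mu)>0.$$
The endpoint $r=R$ then follows by continuity, since $B_R^g(p)=\bigcup_{\rho<R}B_\rho^g(p)$ forces $V(\rho)\to V(R)$ as $\rho\uparrow R$.

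The only genuinely non-formal point is the passage $N\to\infty$ in the iteration: one must check that the ``boundary term at scale $0$'', $\theta^N v_N$, makes no contribution, which is precisely where $\theta<1$ is used, together with the crude small-scale bound $V(\rho)\geq\tfrac12\omg_n\rho^n$ valid on any smooth Riemannian manifold. That bound is used only qualitatively, so the resulting constant $C(n,\mu)$ is independent of $R$, $p$ and $(M,g)$, as the statement requires. (One could instead extract an isoperimetric inequality from the Sobolev inequality and integrate the resulting differential inequality for $V$, but the tent-function iteration above is the most direct route.)
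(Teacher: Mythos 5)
Your proof is correct, and the approach is genuinely different from the paper's. You iterate the recursion $V(\rho)\geq\frac{\mu}{4}\rho^2\,V(\rho/2)^{(n-2)/n}$ down dyadic scales and appeal (only qualitatively, so it drops out in the limit) to the small-scale asymptotic $V(\rho)\sim\omg_n\rho^n$ to kill the boundary term $\theta^N v_N$. The paper instead notes that $\Theta(r)=\Vol_gB_r/\omg_nr^n$ extends continuously to $[0,R]$ with $\Theta(0)=1$, lets $r_0$ be a minimizer, and applies the Sobolev inequality exactly once at scale $r_0$, using $\Theta(r_0/2)\geq\Theta(r_0)$ to convert $V(r_0/2)$ into $V(r_0)$; this avoids any iteration. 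Both arguments use only completeness and $\ptl B_R\neq\emptyset$, and produce constants of the same shape $c(n)\mu^{n/2}$ (yours, $\omg_n^{-1}\mu^{n/2}2^{-n^2/2}$, is in fact a factor $2^n$ better, owing to the sharper tent test function versus the paper's cutoff with $|\nabla\fai|\leq 4/r_0$). The trade-off: the paper's proof is shorter and requires no limiting argument, whereas yours makes fully explicit where the normalization $\Theta(0^+)=1$ enters, at the cost of the series computation and the careful $\liminf_{N\to\infty}\theta^N v_N\geq 0$ step (which you handle correctly by bounding $v_N$ both above via $V(r_N)\leq V(R)$ and below via the small-scale Euclidean comparison).
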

\begin{proof}
	Consider the volume ratio function $$\Theta(r):= \begin{cases}
	\frac{\Vol_gB_r(p)}{\omg_nr^n},\ & 0<r\leq R\\
	1,\ & r=0
	\end{cases}.$$
	Then $\Theta(r)$ is continuous and positive on $[0,R]$. Say $\Theta(r)$ takes its minimum at $r=r_0$. If $r_0=0$ then we can take $C(n,\mu)=1$. So assume $r_0>0$ and then from $\Theta(\frac{r_0}2)\geq \Theta(r_0)$ we have $$\Vol_gB_{\frac{r_0}2}(p)\geq 2^{-n}\Vol_gB_{r_0}(p).$$
	Now take a test function $\fai(x)=\fai(d_g(p,x))$ with $$0\leq\fai\leq 1, \ \supp\fai\subset B_{r_0}(p),\ \fai|_{B_{\frac{r_0}2}}=1,\ |\nabla_g\fai|\leq \frac4{r_0},$$
	then $$\mu\cdot (\Vol_gB_{\frac{r_0}2}(p))^{\frac{n-2}n}\leq \Vol_gB_{r_0}(p)\cdot \frac{16}{r_0^2},$$
	which together with above inequality implies that $$\Theta(r_0)\geq \frac1\omg_n\cdot\left(\frac{\mu}{2^{n+2}}\right)^{\frac n2}=:C(n,\mu)>0.$$
\end{proof}
Now we recall some basic facts about Sobolev constant of $\R^n$, and one can see the related results in R. Schoen and S.T. Yau's book \cite{SchoenYau:1994}.
Over Euclidean space $\R^n$, we know the optimal Sobolev constant is $$SD(\R^n)=n(n-1)\omg_n^{\frac2n},$$
which can be achieved by the rotational symmetric functions $(a+br^2)^{\frac{(2-n)}2}$, where $r(x)=|x|$ is the distance function and $a,b>0$ are constants. Also for any domain $\Omg\subset \R^n$, we have $SD(\Omg)=SD(\R^n)$.

For later use, consider the annulus $D_{r_1,r_2}:=D_{r_2}(0)\setminus \bar D_{r_1}(0)\subset \R^n$ with $n\geq 4$, and we show that the optimal Sobolev constant can be approximated by $SD(D_{r_1,r_2},\psi_0)$ for some symmetric function $\psi_0(r)\in C_0^{\infty}(D_{r_1,r_2})$, small $0<r_1<1$ and big $r_2>2$. For this, take any small $\epsl>0$ and set $$\Omg=D_{r_1,r_2},\ \Omg_1=D_{r_1,2r_1},\ \Omg_2=D_{r_2-1,r_2},\ \Omg_3=D_{2r_1,r_2-1}.$$ Take a smooth cut-off function $\eta:\R_{\geq 0}\to\R_{\geq 0}$ with $$\eta|_{\Omg_3}=1,\ \supp\eta\subset \Omg,\  |\nabla \eta|\leq \frac 2{r_1}\ over\ \Omg_1,\ |\nabla\eta|\leq 2\ over\ \Omg_2.$$
Define $\psi_0=\eta(r)\cdot\fai_0(r)\in C_0^\infty(\Omg)$ for $\fai_0(r)= (1+r^2)^{\frac{2-n}{2}}$. Then 
\begin{align*}
	\int_\Omg|\nabla\psi_0|^2\dvol_{g_E}
	&\leq \int_{\Omg_3}|\nabla\fai_0|^2+\frac{C(n)}{r_1^2}\int_{r_1}^{2r_1}(1+r^2)^{2-n}r^{n-1}dr\\
	&\  +C(n)\int_{r_1}^{2r_1}(1+r^2)^{-n}r^{n+1}dr\\  &\  +C(n)\int_{r_2-1}^{r_2}((1+r^2)^{2-n}+(1+r^2)^{-n}r^2)r^{n-1}dr\\
	&\leq \int_{\Omg_3}|\nabla\fai_0|^2+C(n)r_1^{n-2}+C(n)(r_2-1)^{3-n},
\end{align*}
and $$\left(\int_\Omg\psi_0^{\frac{2n}{n-2}}\dvol_{g_E}\right)^{\frac{n-2}n}\geq \left(\int_{\Omg_3}\fai_0^{\frac{2n}{n-2}}\right)^{\frac{n-2}n}\geq \frac1{C(n)}>0.$$
Note $\fai_0$ satisfies the equation over $\R^n$ $$\Delta \fai_0+n(n-2)\fai_0^{\frac{n+2}{n-2}}=0,$$
and $$SD(\R^n)=SD(\R^n,\fai_0)=n(n-2)\left(\int_{\R^n} \fai_0^{\frac{2n}{n-2}}\dvol_{g_E}\right)^{\frac2n}.$$
Integral by parts we have 
\begin{align*} 
\int_{\Omg_3}|\nabla\fai_0|^2&
=n(n-2)\int_{\Omg_3}\fai_0^{\frac{2n}{n-2}}+\int_{\ptl D_{r_2-1}}\fai_0\frac{\ptl \fai_0}{\ptl r}-\int_{\ptl D_{2r_1}}\fai_0\frac{\ptl \fai_0}{\ptl r}\\
&\ =n(n-2)\int_{\Omg_3}\fai_0^{\frac{2n}{n-2}}+C(n)\left(\frac{(2r_1)^n}{(1+(2r_1)^2)^{n-1}}-\frac{(r_2-1)^n}{ (1+(r_2-1)^2)^{n-1}}\right).
\end{align*}
Now we first choose $r_2$ large enough such that $C(n)(r_2-1)^{3-n}\leq \frac{\epsl}2$, then choose $r_1>0$ small enough such that $C(n)r_1^{n-2}\leq \frac\epsl2$ and $$\int_{\Omg_3}|\nabla\fai_0|^2\leq n(n-2)\int_{\Omg_3}\fai_0^{\frac{2n}{n-2}},$$
so $$SD(\Omg,\psi_0)\leq (1+\epsl)SD(\R^n).$$

From these facts we can prove a gap lemma, and for this we first define the Dirichlet-Sobolev constant at infinity of a pointed manifold $(M,g,p)$ as $$SD_\infty(M,g,p):=\inf_{0<r_1<r_2}\limsup_{r\to\infty}SD(B_{r_2r}^g(p)\setminus \bar B_{r_1r}^g(p),g).$$
\begin{lem}\label{gap-lemma}
	Let $n\geq 4$ and $(M,g,p)$ be a pointed complete Ricci flat $n$-manifold with maximal volume growth, i.e. there exists $c_0>0$ such that for all $r>0$ $$\frac{\Vol_gB_r^g(p)}{\omg_nr^n}\geq c_0.$$ Then there is an $\epsl=\epsl(n)>0$ such that if  $$SD_\infty(M,g,p)\geq (1-\epsl)\cdot SD(\R^n),$$
	then $(M,g)$ is isometric to $\R^n$.
\end{lem}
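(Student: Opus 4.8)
The strategy is a standard blow-down contradiction argument, converting a Sobolev-constant hypothesis at infinity into a volume-ratio hypothesis, and then invoking the volume gap Lemma \ref{gap-lem-volume}. First I would note that under the maximal volume growth assumption, the rescaled pointed manifolds $(M, r_j^{-2} g, p)$ for any sequence $r_j \to \infty$ have a uniform lower bound on the harmonic radius away from $p$ (using $\Ric = 0$, so $|\Ric| \leq 0$, together with Lemma \ref{injectivity} and the non-collapsing from $c_0$); hence by Lemma \ref{local-compactness} a subsequence converges in $C^{1,\af}_{loc}$ on $M_\infty \setminus \{p_\infty\}$ to a pointed metric cone $(M_\infty, g_\infty, p_\infty)$ — the cone structure coming from the fact that any blow-down of a Ricci-flat manifold with maximal volume growth is a metric cone, which is the Ricci-flat specialization of the Cheeger–Colding volume cone implies metric cone theorem, and $M_\infty$ is smooth and Ricci-flat away from the vertex. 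The volume-ratio function $\Theta(r) = \Vol_g B_r^g(p)/(\omg_n r^n)$ is monotone non-increasing along the Ricci-flat manifold (Bishop–Gromov), so it has a limit $\Theta_\infty \geq c_0 > 0$ as $r \to \infty$, and the cone $(M_\infty, g_\infty)$ has every volume ratio equal to $\Theta_\infty$.

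The key step is then to show that the Sobolev hypothesis forces $\Theta_\infty$ to be close to $1$. Suppose $\Theta_\infty \leq 1 - \delta$ for some fixed $\delta > 0$; I would derive a contradiction with $SD_\infty(M,g,p) \geq (1-\epsl) SD(\R^n)$ for $\epsl = \epsl(n,\delta)$ small enough. The point is that on the limit cone, any annulus $D_{r_1 r_2}$-type region $B_{r_2}^{g_\infty}(p_\infty) \setminus \bar B_{r_1}^{g_\infty}(p_\infty)$ has a Sobolev constant bounded away from $SD(\R^n)$ by a definite amount depending only on $\Theta_\infty$: indeed, pulling back the cone metric and using that the cone has volume ratio $\Theta_\infty < 1$, the test function computation done in the excerpt just before the lemma (the construction of $\psi_0 = \eta(r)\fai_0(r)$ on an annulus, where $\fai_0 = (1+r^2)^{(2-n)/2}$ is the extremal for $\R^n$) shows, when transported to the cone, that $SD(\text{annulus in the cone}) \leq SD(\R^n) \cdot \Theta_\infty^{2/n}(1 + o(1))$ as the annulus exhausts the cone — because the extremal profile concentrates its energy and mass in the "inner" region where the cone looks like a cone over a link of smaller-than-standard volume, scaling down both numerator and denominator but with a net loss governed by $\Theta_\infty$. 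By the $C^{1,\af}_{loc}$ convergence (which gives convergence of the Dirichlet-Sobolev functionals on fixed annular domains with a fixed compactly supported test function), this upper bound on the cone transfers back to $\limsup_{r\to\infty} SD(B_{r_2 r}^g(p) \setminus \bar B_{r_1 r}^g(p), g)$ for suitable $r_1 < r_2$, contradicting the hypothesis once $\epsl < 1 - (1-\delta)^{2/n}$.

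Having shown $\Theta_\infty \geq 1 - \epsl(n)$, Bishop–Gromov monotonicity of $\Theta$ gives $\Theta(r) \geq \Theta_\infty \geq 1 - \epsl(n)$ for all finite $r$ as well, so the volume gap Lemma \ref{gap-lem-volume} applies directly and yields that $(M,g)$ is isometric to $\R^n$. The main obstacle is making the "Sobolev constant of an annulus in a cone of volume ratio $\Theta_\infty < 1$ is bounded away from $SD(\R^n)$" step rigorous and quantitative: one must check that the extremal-like test function $\fai_0$, truncated to the annulus as in the excerpt, really does realize a Sobolev quotient close to $\Theta_\infty^{2/n} SD(\R^n)$ on the cone — in particular that the energy and the $L^{2n/(n-2)}$-mass both localize in the region of fixed geometry where the defect $\Theta_\infty$ is visible, with the truncation errors controlled exactly as in the $r_1 \to 0$, $r_2 \to \infty$ estimates already carried out before the lemma — and then that the $C^{1,\af}_{loc}$ convergence is strong enough to pass this computation from the cone back to the approximating annuli in $(M,g)$. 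A secondary technical point is justifying the blow-down-is-a-cone statement in the generality needed; since we only need the volume ratio of the limit rather than its full cone structure, one can alternatively argue purely with Bishop–Gromov and the convergence of volume ratios, avoiding the cone theorem entirely, which I would do if a self-contained argument is preferred.
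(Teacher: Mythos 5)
Your main route agrees with the paper's in the key quantitative idea — plug the Euclidean extremal-type radial test function $\psi_0$ into annuli at scale $r\to\infty$, use the asymptotic volume ratio to see that the Sobolev quotient degrades by a factor $\Theta_\infty^{2/n}$, and conclude $\Theta_\infty\geq 1-\epsl(n)$ so that Lemma \ref{gap-lem-volume} applies — but you insert an unnecessary intermediate object, the blow-down metric cone. The paper works directly on $(M,g)$: by Bishop--Gromov the volume ratio decreases to $c_1\geq c_0$ and $\Vol_g\ptl B_r^g(p)/(n\omg_n r^{n-1})\to c_1$; one then takes $\fai(x)=\psi_0(d_g(p,x)/r_i)$ as a test function on the annulus $B_{r_2 r_i}^g(p)\setminus\bar B_{r_1 r_i}^g(p)$ and, via the co-area formula and these boundary-volume asymptotics, the numerator and denominator of the Sobolev quotient both pick up a factor $\approx c_1$, yielding $SD(\text{annulus},g)\leq(1+\epsl)^2 c_1^{2/n}SD(\R^n)$, after which the hypothesis forces $c_1\geq(1-\epsl)^n/(1+\epsl)^n$. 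No limit space is taken at all. Your cone route requires Cheeger--Colding's volume-cone-implies-metric-cone theorem plus an argument that the annular Sobolev computation passes through $C^{1,\af}_{\text{loc}}$ convergence to and from the limit — heavier machinery that buys nothing here, since the cone's constant volume ratio $\Theta_\infty$ is exactly the asymptotic quantity $c_1$ already available on $(M,g)$ by Bishop--Gromov. To your credit, you flag precisely this in your final sentence and offer the direct Bishop--Gromov argument as the preferred alternative; that alternative is the paper's proof. One minor caution with the cone route as you wrote it: justifying that the Dirichlet--Sobolev constants of annuli converge under pointed $C^{1,\af}_{\text{loc}}$ convergence needs a short argument (upper semicontinuity via fixed test functions is easy and is all you actually use, but lower semicontinuity is not automatic and you should avoid phrasing that suggests you need it).
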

\begin{proof}
	From $\Ric_g=0$ and volume comparison we know the volume ratio is non-increasing and has a upper bound by $1$. So we have $$\lim_{r\to\infty}\frac{\Vol_g\ptl B_r^g(p)}{n\omg_nr^{n-1}}=c_1$$
	for some $c_0\leq c_1\leq 1$.
	Then for any small $\epsl>0$, there exists $r_0>0$ such that for all $r\geq r_0$ $$c_1\leq \frac{\Vol_g\ptl B_r^g(p)}{n\omg_nr^{n-1}}\leq (1+\epsl)c_1.$$  
	Let $0<r_1<r_2$ be radius as discussed above and $\psi_0(r)$ be a symmetric function such that $$SD(D_{r_1,r_2},\psi_0)\leq (1+\epsl)SD(\R^n).$$ 
	Choose a large $r_{i}\geq r_0$ such that $$SD(B_{r_2r_i}^g(p)\setminus \bar B_{r_1r_i}^g(p),g)\geq (1-\epsl)SD_\infty(M,g,p),$$
	and take a test function $\fai$ over $B_{r_2r_i}^g(p)\setminus \bar B_{r_1r_i}^g(p)$ be $$\fai(x)=\psi_0\left(\frac{d_g(p,x)}{r_i}\right).$$ So from the definition of Sobolev constant we have 
	$$SD(B_{r_2r_i}^g(p)\setminus \bar B_{r_1r_i}^g(p),g)\cdot\leq (1+\epsl)^2c_1^{\frac 2n}\cdot SD(\R^n),$$
	which implies that $$c_1\geq \frac{(1-\epsl)^{ n}}{(1+\epsl)^n}.$$
	So by Lemma \ref{gap-lem-volume} it is sufficient to take $\epsl$ small enough such that $c_1\geq 1-\epsl(n)$.
\end{proof}

\subsection{Positive Yamabe constant and Sobolev constant}\label{positive-yamabe-constant and sobolev constant}
Recall that for a domain $\Omg$ in a Riemannian manifold $(M,g)$, the corresponding Yamabe functional of $\fai\in W^{1,2}_0(\Omg,g)$ is $$Y(\Omg,\fai):=\frac{\int_{\Omg}(|\nabla_{g} \fai|^2+a_0R_{g}\fai^2)\dvol_{g}}{\left(\int_{\Omg}\fai^{\frac{2n}{n-2}}\dvol_{g}\right)^{\frac{n-2}n}},$$ where $a_0=\frac{n-2}{4(n-1)}$, and the Yamabe constant is $$Y(\Omg,g)=\inf_{\substack{\fai\in W^{1,2}_0(\Omg)\\ \fai\neq 0}}Y(\Omg,\fai),$$
which is conformal invariant. And $Y(\Omg_1,g)\geq Y(\Omg_2,g)$ when $\Omg_1\subset \Omg_2$. 

Note that given a compact manifold $(M,g)$, after solving the Yamabe problem and use the conformal invariance of Yamabe functional we can assume that $R_{g}=\Const$, see e.g. in \cite{SchoenYau:1994}. Note when $R_{g}=0$, the Yamabe functional is exactly the Dirichlet-Sobolev functional and $Y(\R^n)=SD(\R^n)$. 

In general, if $Y(\Omg,g)\geq Y_0>0$, then for any $\fai\in W_0^{1,2}(\Omg,g)$ we have \begin{align*}
Y_0\cdot \left(\int_{\Omg}\fai^{\frac{2n}{n-2}}\dvol_{g}\right)^{\frac{n-2}n}&\leq \int_{\Omg}(|\nabla_{g}\fai|^2+a_0R_{g}\fai^2)\dvol_{g} \\
&\leq \int_{\Omg}|\nabla_{g}\fai|^2\dvol_{g} \\
&\ + a_0\left(\int_{\Omg}|R_{g}|^p\dvol_{g}\right)^{\frac1p}\cdot (\Vol_g \Omg)^{\frac 2n-\frac 1p}\cdot \left(\int_{\Omg}\fai^{\frac{2n}{n-2}}\dvol_{g}\right)^{\frac{n-2}n}.
\end{align*}
So if 
\begin{equation}\label{yamabe-term} a_0\left(\int_{\Omg}|R_{g}|^p\dvol_{g}\right)^{\frac1p}\cdot (\Vol_g \Omg)^{\frac 2n-\frac 1p}\leq \frac{Y_0}2, \tag{$*$}
\end{equation}
then we can get that $$SD(\Omg,g)\geq\frac{Y_0}2.$$ In the following sections we will make use of the smallness of either scalar integral or volume to ensure (\ref{yamabe-term}), and so we can get volume growth control from positive Yamabe constant. Since the Yamabe constant has invariance properties when restricted in a conformal class, it is better to get the positivity of Yamabe constant than Sobolev constant from some geometric conditions.

In the case $R_g>0$ we can easily get the positivity of Yamabe constant by solving the Yamabe problem. In general if we consider a domain with small volume in a compact manifold, then we can also get the positivity of Yamabe constant from the following lemma.
\begin{lem}\label{small-volume-domain}
	Given a $n$-dimensional compact Riemannian manifold $(M,g)$, then there exist constants $C(g)>0$ and $\epsl=\epsl(g)>0$ such that for any domain $U\subset M$ with $\Vol_gU\leq \epsl$ we have $$Y(U,g)\geq \frac1{C(g)}.$$
\end{lem}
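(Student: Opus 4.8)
The plan is to deduce the bound from the \emph{global} Sobolev inequality on the compact manifold $(M,g)$, rather than trying to establish a Sobolev inequality on each domain $U$ separately. Recall that on a compact Riemannian manifold there exist constants $A=A(g)>0$ and $B=B(g)>0$ such that
$$\left(\int_M|\psi|^{\frac{2n}{n-2}}\dvol_g\right)^{\frac{n-2}n}\leq A\int_M|\nabla_g\psi|^2\dvol_g+B\int_M\psi^2\dvol_g\qquad\text{for all }\psi\in W^{1,2}(M,g),$$
see e.g. \cite{SchoenYau:1994}. The point is that $A$ and $B$ depend only on $(M,g)$, whereas an intrinsic Sobolev constant of $U$ could degenerate when $U$ is thin or irregular; passing through the ambient inequality sidesteps this.

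First I would fix a nonzero $\fai\in W^{1,2}_0(U,g)$, extend it by zero to $\psi\in W^{1,2}(M,g)$ supported in $\bar U$, and combine the global inequality with the H\"older bound $\int_U\fai^2\dvol_g\leq(\Vol_gU)^{2/n}\big(\int_U\fai^{\frac{2n}{n-2}}\dvol_g\big)^{\frac{n-2}n}$ to get
$$\left(\int_U\fai^{\frac{2n}{n-2}}\dvol_g\right)^{\frac{n-2}n}\leq A\int_U|\nabla_g\fai|^2\dvol_g+B(\Vol_gU)^{2/n}\left(\int_U\fai^{\frac{2n}{n-2}}\dvol_g\right)^{\frac{n-2}n}.$$
Choosing $\epsl_1=\epsl_1(g)>0$ with $B\epsl_1^{2/n}\leq\tfrac12$, for any $U$ with $\Vol_gU\leq\epsl_1$ the last term is absorbed and one obtains $SD(U,g)\geq\tfrac1{2A}$. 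Next I would put the curvature term back: since $M$ is compact, $R_0:=\max_M|R_g|<\infty$, and H\"older together with the bound just obtained gives
$$a_0\Big|\int_U R_g\fai^2\dvol_g\Big|\leq a_0R_0(\Vol_gU)^{2/n}\left(\int_U\fai^{\frac{2n}{n-2}}\dvol_g\right)^{\frac{n-2}n}\leq 2a_0R_0A(\Vol_gU)^{2/n}\int_U|\nabla_g\fai|^2\dvol_g.$$
Shrinking $\epsl$ to some $\epsl=\epsl(g)\leq\epsl_1$ with $2a_0R_0A\,\epsl^{2/n}\leq\tfrac12$, the Yamabe numerator is at least $\tfrac12\int_U|\nabla_g\fai|^2\dvol_g\geq\tfrac1{4A}\big(\int_U\fai^{\frac{2n}{n-2}}\dvol_g\big)^{\frac{n-2}n}$, and taking the infimum over $\fai$ yields $Y(U,g)\geq\tfrac1{4A}=:\tfrac1{C(g)}$.

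I do not expect a genuine obstacle; the argument reduces to the compact Sobolev inequality plus two applications of H\"older controlled by the hypothesis $\Vol_gU\leq\epsl$. The only subtlety to handle with care is the order of quantifiers: the Sobolev-type control must hold uniformly over \emph{all} small-volume domains at once, which is exactly why the argument is set up through the ambient, $U$-independent inequality rather than inside $U$ itself.
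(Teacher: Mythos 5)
Your proof is correct and follows essentially the same route as the paper's: both invoke the global Sobolev inequality on the compact $(M,g)$, use H\"older with the small-volume hypothesis to absorb the $L^2$ (resp.\ scalar-curvature) term into the $L^{2n/(n-2)}$ term, and conclude by two successive shrinkings of $\epsl$. The only cosmetic difference is that the paper writes the Sobolev inequality as $\leq C_2(g)\|\fai\|^2_{W^{1,2}(M,g)}$ (so $A=B=C_2$) while you keep separate constants $A,B$; the mechanics and the final bound $Y(U,g)\geq \tfrac1{4A}$ are identical in structure to the paper's $Y(U,g)\geq\tfrac1{4C_2(g)}$.
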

\begin{proof}
	For any $\fai\in W_0^{1,2}(U)$ and $\fai\neq 0$, we know that
	$$\int_{U}a_0R_g\fai^2\dvol_{g}\leq C_1(g)\cdot (\Vol_gU )^{\frac 2n} \left(\int_{U}\fai^{\frac{2n}{n-2}}\dvol_{g}\right)^{\frac{n-2}n},$$ and by Sobolev inequality on $(M,g)$ 
	\begin{align*} \left(\int_{U}\fai^{\frac{2n}{n-2}}\dvol_{g}\right)^{\frac{n-2}n}&\leq C_2(g)\|\fai\|_{W^{1,2}(M,g)}^2\\
	&\leq C_2(g)\int_{U}|\nabla_{g}\fai|^2\dvol_{g}+C_2(g) (\Vol_gU )^{\frac 2n} \left(\int_{U}\fai^{\frac{2n}{n-2}}\dvol_{g}\right)^{\frac{n-2}n}.
	\end{align*}
	If we choose $\epsl=\epsl(g)$ small such that $C_2(g)\epsl^{\frac 2n}\leq \frac12$,  then
	$$\left(\int_{U}\fai^{\frac{2n}{n-2}}\dvol_{g}\right)^{\frac{n-2}n}\leq 2C_2(g)\int_{U}|\nabla_{g}\fai|^2\dvol_{g},$$
	and  $$\int_{U}(|\nabla_{g} \fai|^2+a_0R_{g}\fai^2)\dvol_{g}\geq (1-2C_1(g)C_2(g)\epsl^{\frac 2n})\int_{U}|\nabla_{g}\fai|^2\dvol_{g}.$$
	Again choose small $\epsl$ with $2C_1(g)C_2(g)\epsl^{\frac 2n}\leq \frac12$, then we can set $C(g)=4C_2(g)$ and have the desired control of Yamabe constant.
\end{proof}
When restricted in a small neighborhood of a compact manifold we can say more.
\begin{lem}\label{yamabe-rigidty}
	Given a $n$-dimensional compact Riemannian manifold $(M,g)$, then for any $\epsl>0$ there exists a $\delta=\delta(n,g)>0$ such that for any $x\in M$ $$Y(B_{\delta}^{g}(x),g)\geq (1-\epsl)Y(\R^n).$$
\end{lem}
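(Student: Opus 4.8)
The plan is to pass to geodesic normal coordinates on a small ball centred at $x$ and compare the Yamabe functional of $g$ there with the sharp Euclidean Dirichlet--Sobolev functional, controlling every error term uniformly in $x\in M$ by compactness. This is just the quantitative version of the statement that, after suitable rescaling, the normal-coordinate metric on a small ball is $C^0$-close to the flat one.

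Concretely, I would first fix $r_0>0$ below the injectivity radius of $(M,g)$ and set $C_0:=\max_M|R_g|$. Since $M$ is compact, its curvature and the derivatives of its curvature are uniformly bounded, so the Taylor expansion of the metric in normal coordinates is uniform: there is $C_1=C_1(g)$ with $|(\exp_x^*g)_{ij}(v)-\delta_{ij}|\leq C_1|v|^2$ for all $x\in M$ and all $|v|\leq r_0$. Hence, writing $\tilde g=\exp_x^*g$, for $\delta\leq r_0$ on $B_\delta(0)\subset\R^n$ one has, with a constant $C=C(n,g)$ and for $\delta$ small, $\tilde g^{ij}\geq(1-C\delta^2)\delta^{ij}$ as quadratic forms, $1-C\delta^2\leq\sqrt{\det\tilde g}\leq 1+C\delta^2$, and $|R_{\tilde g}|\leq C_0$; moreover $W^{1,2}_0(B_\delta^g(x),g)$ is identified via $\exp_x$ with $W^{1,2}_0(B_\delta(0))$ in the Euclidean sense, because the two metrics on $B_\delta(0)$ are uniformly equivalent.

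Then, for $0\neq\varphi\in W^{1,2}_0(B_\delta(0))$, I would bound the numerator of $Y(B_\delta^g(x),\varphi)$ below by $(1-C\delta^2)\int_{B_\delta(0)}\delta^{ij}\ptl_i\varphi\,\ptl_j\varphi\,dv$ minus a scalar-curvature error term which, by H\"older's inequality and $\Vol B_\delta(0)=\omg_n\delta^n$, is at most $C\delta^2\bigl(\int\varphi^{\frac{2n}{n-2}}\,dv\bigr)^{\frac{n-2}n}$; the denominator is at most $(1+C\delta^2)\bigl(\int\varphi^{\frac{2n}{n-2}}\,dv\bigr)^{\frac{n-2}n}$. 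Inserting the sharp Euclidean Sobolev inequality $\int_{\R^n}\delta^{ij}\ptl_i\varphi\,\ptl_j\varphi\,dv\geq SD(\R^n)\bigl(\int\varphi^{\frac{2n}{n-2}}\,dv\bigr)^{\frac{n-2}n}$ and recalling $SD(\R^n)=Y(\R^n)$ yields
\[
Y(B_\delta^g(x),\varphi)\ \geq\ \frac{(1-C\delta^2)\,Y(\R^n)-C\delta^2}{1+C\delta^2},
\]
with $C=C(n,g)$ independent of $x$ and of $\varphi$. Taking the infimum over $\varphi$ and then choosing $\delta=\delta(n,g,\epsl)$ small enough that the right-hand side is $\geq(1-\epsl)Y(\R^n)$ completes the argument.

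The only step requiring genuine attention is the uniformity in $x$ of the comparison of $\tilde g$ with the Euclidean metric; this is exactly where compactness of $M$ enters, through the uniform positive lower bound on the injectivity radius and the uniform bounds on curvature that make the normal-coordinate Taylor expansion uniform. Everything else is H\"older's inequality and the Euclidean Sobolev inequality already recalled in this section.
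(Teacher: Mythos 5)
Your argument is correct and follows essentially the same route as the paper: compare the metric with the Euclidean metric in local coordinates on a small ball, absorb the scalar-curvature term via H\"older's inequality and the smallness of the volume, and invoke the sharp Euclidean Dirichlet--Sobolev constant together with $Y(D)=Y(\R^n)$ for $D\subset\R^n$. The one cosmetic difference is that you obtain the uniformity in $x$ directly from the curvature-controlled Taylor expansion in geodesic normal coordinates, whereas the paper picks an $x$-dependent coordinate and then invokes the compactness of $M$ at the end; your version makes that last step explicit, but it is the same idea.
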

\begin{proof}
	For any $x\in M$, there exists $\delta_x>0$ small such that there is a coordinate $\fai_x:B_{\delta_x}^g(x)\to \R^n$, under which $|g_{ij}(y)-\delta_{ij}|<\epsl$. For any $\fai\in W_0^{1,2}(B_{\delta_x}^{g}(x)),$ set $\psi=\fai\circ\fai_x^{-1}$ and $D_x=\fai_x(B_{\delta_x}^{g}(x))$, and since $$\int_{B_{\delta_x}^{g}(x)}a_0R_g\fai^2\dvol_{g}\leq C(g)\cdot (\Vol_gB_{\delta_x}^{g}(x) )^{\frac 2n} \left(\int_{B_{\delta_x}^{g}(x)}\fai^{\frac{2n}{n-2}}\dvol_{g}\right)^{\frac{n-2}n}$$
	and we can take $\delta_x$ small enough such that $ C(g)\cdot (\Vol_gB_{\delta_x}^{g}(x) )^{\frac 2n}<\epsl,$ we have
	$$\frac{\int_{B_{\delta_x}^{g}(x)}(|\nabla_{g} \fai|^2+a_0R_g\fai^2)\dvol_{g}}{\left(\int_{B_{\delta_x}^{g}(x)}\fai^{\frac{2n}{n-2}}\dvol_{g}\right)^{\frac{n-2}n}}\geq(1-\epsl)\cdot \frac{\int_{D_x}(|\nabla_{g_E} \psi|^2)\dvol_{g_E}}{\left(\int_{D_x}\psi^{\frac{2n}{n-2}}\dvol_{g_E}\right)^{\frac{n-2}n}}-\epsl.$$
	So from the fact that $Y(D_x)=Y(\R^n)$ we know the conclusion holds locally. From the compactness of $M$ it is easy to see the conclusion holds with a uniform $\delta>0$.
\end{proof}

\section{$L^p$-constant scalar curvature in conformal class of $S^n$}\label{L^p constant on S^n}
Note that when $n\geq 3$ and for conformal metrics $g_u=u^{\frac 4{n-2}}g_0$ we have the scalar curvature equation $$-\Delta_{g_0}u=-a_0R_{g_0}u+a_0R_{g_u}u^{\frac4{n-2}}\cdot u,$$
where $a_0=\frac{n-2}{4(n-1)}$.
In this section we will focus on this equation in the conformal class $\Cc(S^n,g_0)$, and use $\epsl$-regularity Lemma \ref{epsl-regularity} with blowup method to prove the rigidity result under scalar curvature rigidity condition.
\begin{proof}[Proof of Theorem \ref{L^p-constant scalar}]
It is sufficient to show that for a sequence of conformal metrics $(S^n,g_i)\in\Cc(S^n,g_0)$ with $\Vol_{g_i}S^n=\Vol_{g_0}S^n$ and $$ \lim_{i\to\infty}\int_{S^n}|R_{g_i}-n(n-1)|^p \dvol_{g_i}=0,$$ there is a subsequence converges to $(S^n,g_0)$ in the $C^{\af}$-topology, which means that up to diffeomorphisms metrics $g_i\to g_0$ in $C^{\af}(S^n)$ as tensors.

Locally over $B_1^{g_0}(x)\subset S^n$, under coordinate the scalar equation becomes $$-a^{ij}\ptl_i\ptl_ju+b^i\ptl_iu=f_1u+f_2u^{\frac4{n-2}}u,$$
where $$a^{ij}= g_0^{ij},\ b^i=\frac1{\sqrt{\det g_0}}\ptl_j(\sqrt{\det g_0}\cdot g_0^{ij}),$$
and $$C(g_0)^{-1}\leq a^{ij}\leq C(g_0),\ \|a^{ij}\|_{C^0}+ \|b^i\|_{L^{\infty}}\leq C(g_0),$$
$$f_1=-a_0R(g_0)\in C^0(M),\ f_2=a_0R(g_u)\in L^p(M,g_u).$$
So $$\|u\|_{W^{1,2}(S^n,g_0)}\leq C.$$
Write $g_k=u_k^{\frac4{n-2}}g_0$ and set \[ r(g_k,x):=\inf\{r>0: \Vol_{g_k}B^{g_0}_r(x))=\frac{\epsl_0}2\} ,\]
\[ r_k:=\inf_{x\in S^n}r(g_k,x).\]
If there exists some $ r_0>0 $ such that \[ r_k\geq r_0 ,\] then from $\epsl$-regularity Lemma \ref{epsl-regularity} we get the $ W^{2,p} $-weak convergence and thus $C^{\af}$-convergent subsequence of $u_k$. So let's assume that for some sequence $$x_k\to x_0\in S^n,\ r_k=r(g_k,x_k)\to 0. $$
We will show that the blow up at the concentration point $x_0$ gives diffeomorphisms of $S^n$, which allows us to get convergence up to such diffeomorphisms. First through a rotation $\sigma_k$ of $S^n$ we can assume that $x_k=x_0$ for all $k$. Then let $y_0$ be the antipodal point of $x_0$ and $$\pi:S^n\setminus\{y_0\}\to \R^n$$ the stereographic projection map with $\pi(x_0)=0$. We still denote the metric $(\pi^{-1})^*g_0$ by $g_0$ which is conformal to the Euclidean metric $g_E$ on $\R^n$.
Then there exists a constant $b_0>0$ such that at point $0$ $$g_0(0)=b_0g_E.$$
Let $y$ be the coordinate of $\R^n$ and we shall consider the scalar dilations over $\R^n$. For this we set
\[ g_{0,k}(x) :=g_{0,ij}(r_kx)dx^i\otimes dx^j,\ \tilde u_k(x):=r_k^{\frac{n-2}2}u_k(r_k x),\]
then under linear rescale $ y=r_kx $, in these different coordinates we have the relation \[ (D(a,r),g_{0,k}(x)) = (D(ar_k,rr_k),r_k^{-2}g_0(y)),\]
where $D$ means the disk in $\R^n$ for corresponding coordinate, and in coordinate $x$ the scalar equation becomes
\[ -\Delta_{g_{0,k}}\tilde u_k(x)=-a_0R_{g_{0,k}}\tilde u_k(x)+a_0R_{g_k}(r_kx)\tilde u_k^{\frac{n+2}{n-2}}(x) .\]
Now for any fixed $r>0$ and any $ a\in D(0,r) $ , for all large $k$ we have
$$ \int_{D(a,1)}|R_{g_k}(r_kx)|^p\tilde u_k^{\frac{2n}{n-2}}(x)\dvol_{g_{0,k}}(x)
=\int_{D(ar_k,r_k)}|R_{g_k}(y)|^pu_k^{\frac{2n}{n-2}}(y)\dvol_{g_0}(y)\leq C, 
$$
\[ \int_{D(a,1)}|\tilde u_k|^{\frac{2n}{n-2}}\dvol_{g_{0,k}}(x)=\int_{D(ar_k,r_k)}|u_k|^{\frac{2n}{n-2}}\dvol_{g_0}(y)\leq \epsl_0, \] where the last inequality is from our definition $ r_k\leq r(g_k,ar_k) $ and note the last integral is equal to $\frac{\epsl_0}2$ when $ a=0 $. Since we can find finite cover of $D(0,r)$ by $D(a,1)$, and $g_{0,k}\to b_0g_{E}$ smoothly as $ k\to \infty $, for any $r>0$ from $\epsl$-regularity we have
 \[ \|\tilde u_k\|_{W^{2,p}(D(0,r),g_{E})} \leq C(r).\]
So there exists a function $v\geq 0$ over $ \R^n $ such that up to subsequence, $$ \tilde u_k\wto v \ in\  W^{2,p}_{loc}(\R^n).$$ Let $k\to \infty$ then $v$ satisfies the equation \[ -\Delta_{b_0g_E} v= a_0n(n-1)v^{\frac{n+2}{n-2}} .\]
Note that pullback this equation by $\pi$ gives the scalar equation with constant scalar $n(n-1)$ on $S^n$, and also \[\frac{\epsl_0}2= \int_{D(0,1)}v^{\frac{2n}{n-2}}dvol_{g_E}\leq\int_{\R^n}v^{\frac{2n}{n-2}}\dvol_{g_E}\leq C, \]
which implies $v$ is non-zero and thus gives the standard metric $g_0$. So through $\pi$ we
get diffeomorphisms $f_k$ on $S^n$ that $$f_k(x)=\pi^{-1}\left(\frac{\pi(x)}{r_k}\right),$$ and $f_k^*g_k$ converges to $g_0$ weakly in $W^{2,p}_{loc}(S^n-\{y_0\})$. In fact this weak convergence is global, since otherwise $y_0$ will be a concentration point, then for any small $r>0$, \[ \liminf_{k\to \infty}\Vol_{g_k}B^{g_0}_r(y_0)\geq \frac{\epsl_0}2>0, \] so 
\begin{align*} 
	\liminf_{k\to \infty}\Vol_{g_k}(S^n)&\geq \lim_{r\to 0}\liminf_{k\to \infty}\Vol_{g_k}(S^n-B^{g_0}_r(y_0))+ \frac{\epsl_0}2\\
	&=\Vol_{g_0} S^n+\frac{\epsl_0}2,
\end{align*} 
which contradicts the volume rigidity condition.
\end{proof}
\section{$L^p$-bounded sectional curvature with positive Yamabe constant}\label{section-sectional yamabe}
In this section we use blowup argument of harmonic radius to get a compactness result under $L^p$-bounded sectional curvature and additional positive Yamabe constant condition. Then we give a proof of the Mumford's lemma.
\begin{proof}[Proof of Theorem \ref{positive-yamabe}]
It is sufficient to show for any $n<q<p^*=\frac{np}{n-p}$ the $W^{1,q}$-harmonic radius of $(M_i,g_i)$ has a uniform lower bound, then the theorem follows from the compactness theorem Lemma \ref{compactness}. We argue by contradiction and say there exists a subsequence $(M_i,g_i)$ with $W^{1,q}$-harmonic radius $\epsl_i=\epsl(x_i)\to 0$. Consider the pointed compact manifolds $(M_i,\tilde g_i=\epsl_i^{-2}g_i,x_i)$ with a uniform harmonic radius $\tilde \epsl(x_i)=1$, then by Lemma \ref{compactness}, there exists a complete $C^{\af}$-Riemannian manifold $(N,h)$ such that up to subsequence in the $C^{\af}$-topology $$(M_i,\tilde g_i,x_i)\to (N,h,x_\infty).$$
Then for any $r>0$ there exists a domain $B_r^h(x_\infty)\subset\Omg_r$ and diffeomorphisms $$f_i:\Omg_r\to M_i$$ with $B_r^{\tilde g_i}(x_i)\subset f_i(\Omg_r)$, $f_i^*\tilde g_i$ are uniformly bounded in $W^{1,q}(\Omg_r,h)$ and $f_i^*\tilde g_i\to h$ in $C^{\af}(\Omg_r, h)$. From the elliptic equation of metric involving Ricci curvature in the harmonic coordinate, and  $$\int_{M_i}|K_{\tilde g_i}|^p\dvol_{\tilde g_i}=\epsl_i^{2p-n}\int_{M_i}|K_{g_i}|^p\dvol_{g_i}\leq \Lbd\cdot \epsl_i^{2p-n}\to 0,$$
by the $L^p$-estimate Lemma \ref{L^p-estimate} we in fact have $W^{2,p}$-weak convergence of metrics, then over $(N,h)$ the metric satisfies a weak elliptic equation with $K_h=0$, which together with regularity of elliptic equations implies that $h$ is a smooth metric. 

Note that $N$ is non-compact, since otherwise we may take $r>2\diam_hN$ and then all embeddings $f_i:N\to M_i$ are both open and closed, so $f_i(N)=M_i$ and thus $$\Vol_hN=\lim_{i\to\infty}\Vol_{\tilde g_i}M_i\geq \lim_{i\to\infty}\epsl_i^{-n}V_0=\infty,$$
a contradiction with the assumption that $N$ is compact. So for any $r>0$, $\ptl B_r^h(x_\infty)\neq \emptyset$ and then $\ptl B_r^{\tilde g_i}(x_i)\neq \emptyset$ for all large $i$.

We claim that $(N,h)=\R^n$, which gives the desired contradiction since by assumption the maximal harmonic radius at $x_\infty$ is $1$ but $\R^n$ has global harmonic coordinates. It is sufficient to show that $(N,h)$ has maximal volume growth, which together with $K_h=0$ implies that $N$ is simply connected and thus the Euclidean space. For this, take any $r>0$ and note that
$$\frac{\Vol_h B^h_r(x_\infty)}{\omg_nr^n}=\lim_{i\to \infty}\frac{\Vol_{\tilde g_i} B_r^{\tilde g_i}(x_i)}{\omg_nr^n}\leq 1.$$
Since $\tilde g_i$ is conformal to $g_i$, for all $i$ large enough with $r\epsl_i\leq s_0$ we have $$Y(B_r^{\tilde g_i}(x_i),\tilde g_i)=Y(B_{r\epsl_i}^{ g_i}(x_i),g_i)\geq Y_0.$$
Also (\ref{yamabe-term}) is satisfied for large $i$ since $$\left(\int_{B_r^{\tilde g_i}(x_i)}|R_{\tilde g_i}|^p\dvol_{\tilde g_i}\right)^{\frac1p}\cdot (\Vol_{\tilde g_i} B_r^{\tilde g_i}(x_i))^{\frac 2n-\frac 1p}\leq \epsl_i^{2-\frac np}\cdot \Lbd^{\frac 1p}\cdot (2\omg_nr^n)^{\frac 2n-\frac 1p} ,$$
so for all $i$ large enough $$SD(B_r^{\tilde g_i}(x_i))\geq \frac{Y_0}2,$$
which together with Lemma \ref{volume-ratio} implies that $B_r^{\tilde g_i}(x_i)$ has maximal volume growth, thus $$\frac{\Vol_h B^h_r(x_\infty)}{\omg_nr^n}\geq C(Y_0)>0.$$
\end{proof}
As a corollary we can prove the generalization of Mumford's lemma.
\begin{proof}[Proof of Theorem \ref{Mumford's lemma}]
	Assume that $\pi_1(M)$'s minimal length $\geq l_0>0$. Let $\tilde M$ be its universal covering with induced metric from $M$, so $\tilde M$ is simply connected locally conformal flat manifolds, and then there exists a conformal immersion $\Fai:\tilde M\to S^n$, which implies that $Y(\tilde M)=Y(S^n,g_0)>0$, see the details and related results in \cite{SchoenYau:1994}.
	
	We claim that $\pi: B_{\frac{l_0}2}(\tilde p)\to B_{\frac{l_0}2}(p)$ is an isometry. To see this, it is only need to show $\pi|_{B_{\frac{l_0}2}(\tilde p)}$ is injective, otherwise say there exist $\tilde x_1, \tilde x_2\in B_{\frac{l_0}2}(\tilde p)$ with $\pi(\tilde x_1)=\pi(\tilde x_2)=x$. On one hand, $$d(\tilde x_1,\tilde x_2)\leq d(\tilde x_1,\tilde p)+d(\tilde p,\tilde x_2)<l_0.$$
	On the other hand, choose a minimal geodesic $\tilde \gm$ connect $\tilde x_1$ and $\tilde x_2$. Then set $\gm=\pi(\tilde \gm)$ which is a non-trivial loop at $x$, so the length of $\gm$ is greater than $l_0$. The local isometry property of $\pi$ implies that the length of $\tilde\gm$ and thus $d(\tilde x_1,\tilde x_2)$ is greater than $l_0$, a contradiction. 
	
	So we have uniform positive Yamabe constant over the geodesic ball with uniform radius $\frac{l_0}2$. Thus this theorem is a corollary of the above theorem.
\end{proof}

\section{$L^p$-bounded sectional curvature in conformal class}\label{section-L^p-bounded-sectional}
With those techniques discussed before, we can now prove our main results. To better understand the ideas, we first prove the sectional curvature case in this section.
\begin{proof}[Proof of Theorem \ref{L^p-section}]
By Solving the Yamabe equation we can assume that $R_{g_0}=R_0=\Const$. It is sufficient to show for any $n<q<p^*$ the $W^{1,q}$-harmonic radius has a uniformly lower bound. By contradiction argument same as before in Section \ref{section-sectional yamabe} for the proof of Theorem \ref{positive-yamabe}, there exist $g_i\in\Cc(M,g_0)$ with the harmonic radius $\epsl_i=\epsl(x_i)\to 0$ and in $C^{\af}$-topology $$(M,\tilde g_i=\epsl_i^{-2}g_i,x_i)\to (N,h,x_\infty),$$
where $(N,h)$ is a complete non-compact Riemannian manifold with $K_h=0$. And we also need to show that $(N,h)=\R^n$ which will give a contradiction.

Case 1): for any $r>0$ we have $$\Vol_{g_0}B_r^{\tilde g_i}(x_i)\to 0$$ as $i\to\infty.$
Choose small $\epsl_1=\epsl_1(g_0)$ appeared in Lemma \ref{small-volume-domain}, then for all large enough $i$, we have $$\Vol_{g_0}B_r^{\tilde g_i}(x_i)\leq \epsl_1.$$
By Lemma \ref{small-volume-domain} there exists a positive constant $C(g_0)>0$ such that $$Y(B_r^{\tilde g_i}(x_i),\tilde g_i)=Y(B_r^{\tilde g_i}(x_i),g_0)\geq \frac1{C(g_0)}>0.$$
The remaining argument is the same as in Section \ref{section-sectional yamabe}, where we get a uniform Sobolev constant from this positive Yamabe constant and thus the maximal volume growth.

Case 2): there exists $\ubar r_0>0$ and $\ubar V_0>0$ and a subsequence with $$\Vol_{g_0}B_{\ubar r_0}^{\tilde g_i}(x_i)\geq \ubar V_0>0.$$ We will use the scalar equation to get some convergence of functions.
For this we shall pullback all geometric informations on $(M,\tilde g_i)$ to the limit manifold $(N,h)$ and consider the equations under coordinate of $N$. Now taking an increasing sequence $s_i\to \infty$ and set $B_i^h=B_{s_i}^h(x_\infty)\subset N$. Then for any $i$ we can find embeddings $f_{k(i)}:B_i^h\subset \Omg_i\to (M,\tilde g_{k(i)})$ such that $$d_h(x_\infty,f_{k(i)}^{-1}(x_{k(i)}))\leq 2^{-i},\ \|f_{k(i)}^* \tilde g_{k(i)}-h\|_{C^{\af}(B_i^h)}\leq 2^{-i}.$$  
For simplicity we still denote $f_{k(i)}$ by $f_i$. In the following we set $g_0=v_i^{\frac 4{n-2}}\tilde g_i$, $\hat g_i=f_i^*\tilde g_i$ and $g_{0,i}=f_i^*g_0= w_i^{\frac 4{n-2}}\hat g_i,$ with $w_i=v_i\circ f_i.$ Note that
over $M$, $v_i$ satisfies the equation $$-\Delta_{\tilde g_i}v_i=-a_0R_{\tilde g_i}v_i+a_0R_0v_i^{\frac4{n-2}}\cdot v_i.$$ Then for any fixed $r>0$, pullback by $f_i$ for all large $i$, we have equation over $B_r^h:=B_r^h(x_\infty)$
$$-\Delta_{\hat g_i} w_i=-a_0R_{\hat g_i} w_i+a_0R_{0} w_i^{\frac4{n-2}}\cdot  w_i.$$
Choose local coordinate $\{x^j\}$ on $B_1^h(x)\subset B_r^h$, this equation becomes $$-a^{jk}_i\ptl_j\ptl_kw_i+b^j_i\ptl_jw_i=-a_0R_{\hat g_i} w_i+a_0R_{0} w_i^{\frac4{n-2}}\cdot  w_i,$$
where $$a_i^{jk}=\tilde  g_i^{jk},\ b_i^j=\frac1{\sqrt{\det \tilde g_i}}\ptl_k(\sqrt{\det \tilde g_i}\cdot\tilde g_i^{jk})$$
with $$\|a_i^{jk}\|_{C^0}+ \|b_i^j\|_{L^q}\leq C,$$
and $$\int_{N}|R_{\hat g_i}|^p\dvol_{h}\leq \Lbd\epsl_i^{2p-n},\ \int_{N}w_i^{\frac{2n}{n-2}}\dvol_h\leq C(g_0). $$
So there exists $0\leq w\in W^{1,2}(N)$ and up to subsequence $$w_i\wto w\ in\ W^{1,2}_{loc}(N).$$
Now we consider separately the two cases whether $R_0\leq 0$ or $R_0>0$.

Case 2i): $R_0\leq 0$. In this case we have a differential inequality $$-\Delta_{\hat g_i} w_i\leq -a_0R_{\hat g_i} w_i.$$
From naive point of view, $R_{\hat g_i}$ is almost $0$ and such inequality says that the limit $w$ is subharmonic on $(N,h)$, which together with $\Ric_h=0$ implies that $w$ is a constant. Since $w_i^{\frac4{n-2}}\hat g_i$ are all compact metrics $g_0$, the limit $w^{\frac4{n-2}}h$ is again compact which contradicts with fact that $(N,h)$ is non-compact. We transfer this view into strict language in the following.

For any $0<r<R$, take a cut-off function $0\leq\eta_i\leq 1$ with $\supp\eta_i\subset B_R^{\hat g_i}$ and $\eta_i|_{B_r^{\hat g_i}}=1$, $|\nabla_{\hat g_i}\eta|\leq \frac2{R-r}$, multiplying the above differential inequality with $\fai_i=\eta_i^2 w_i^{\af}$ and integral to get for all large $i$
\begin{align*}
\int_{B_r^h}|\nabla_{h} w_i^{\frac{\af+1}2}|^2\dvol_{h}
&\leq C\int_{B_R^{\hat g_i}}(|\nabla_{\hat g_i}\eta|^2 w_i^{\af+1}+R_{\hat g_i} w_i^{\af+1})\dvol_{\hat g_i}\\
&\leq C\left(\int_{B_R^{\hat g_i}} w_i^{(\af+1)\frac{p}{p-1}}\dvol_{\hat g_i}\right)^{1-\frac1p}\left(\frac{(\Vol_{h}B_R^{h})^{\frac1p}}{(R-r)^2}+\left(\int_{B_R^{\hat g_i}}|R_{\hat g_i}|^p\dvol_{\hat g_i}\right)^{\frac1p}\right).
\end{align*}
Also note that $$\int_{B_r^h}| w_i^{\frac{\af+1}2}|^2\dvol_h\leq C\left(\int_{B_{2r}^{\hat g_i}} w_i^{(\af+1)\frac{p}{p-1}}\dvol_{\hat g_i}\right)^{1-\frac1p}\cdot(\Vol_hB^h_r)^{\frac1p}.$$
If we choose $(\af+1)\frac p{p-1}=\frac{2n}{n-2}$, then
 $$\| w_i^{\frac{\af+1}2}\|_{W^{1,2}(B_r^h,h)}\leq C(h,R,\Lbd,p,n,g_0).$$
So we can take subsequence such that $$ w_i^{\frac{\af+1}2}\rightharpoondown  w^{\frac{\af+1}2}\ in\ W^{1,2}(B_r^h,h),$$
and note that $\frac{\af+1}2>1$ so we can take subsequence with $$ w_i\to  w\ in\ L^{\frac{2n}{n-2}},\  w_i\to  w\ a.e.$$
Taking $i\to \infty$ in above inequality, and note $K_h=0$ implies volume comparison that $\Vol_hB_R^h\leq  \omg_nR^n$, we have 
$$\int_{B_r^h}|\nabla_{h} w^{\frac{\af+1}2}|^2\dvol_{h}\leq C(g_0,p,n)\frac{R^{\frac np}}{(R-r)^2},$$
then let $R\to \infty$ we have $|\nabla_{h} w^{\frac{\af+1}2}|=0$ over $B_r^h$. By the arbitrary of $r$ and diagonal method we have $$w_i\to w=\Const\ in\
L^{\frac{2n}{n-2}}_{loc}(N,h).$$
From the assumed positive volume condition on a uniform ball, we have $$\int_{B_{\ubar r_0}^{\hat g_i}(x_\infty)} w_i^{\frac{2n}{n-2}}\dvol_{\hat g_i}\geq \ubar V_0>0.$$
Let $i\to \infty$ we know $w>0$ is a positive constant.
Now from $\epsl$-regularity Lemma \ref{epsl-regularity} we know that up to subsequence $$w_i\wto w\ in\ W^{2,p}_{loc}(N,h),$$
and thus $$w_i\to w\ in\ C^{\af}_{loc}(N,h).$$
Let $c_0=w^{\frac4{n-2}}$, then $f_i^*g_0\to c_0h$ in $C^{\af}_{loc}(N)$. Since for all $x,y\in N$ $$d_{f_i^*g_0}(x,y)\leq \diam_{g_0}(M)<\infty,$$
and $C^{\af}$-convergence ensures the convergence of distance, we have $$\diam_{c_0h}(N)<\infty,$$ which is a contradiction with the fact that $(N,h)$ is non-compact.

Case 2ii): $R_0>0$. In this case we have for any $r>0$ and $x\in M$, $$Y(B_r^{\tilde g_i}(x),\tilde g_i)=Y(B_r^{\tilde g_i}(x),g_0)\geq Y(M,g_0)=:Y_0>0,$$
and also by the same argument in Section \ref{section-sectional yamabe} we can show that $(N,h)$ has maximal volume growth and thus being $\R^n$.
\end{proof}

\section{$L^p$-bounded Ricci curvature in conformal class}
With the similar argument for sectional curvature case and additional analysis, we can now prove our main result in the Ricci curvature case.
\begin{proof}[Proof of Theorem \ref{L^p-bounded ricci}]
We apply the same contradiction argument as in Section \ref{section-L^p-bounded-sectional} for the proof of Theorem \ref{L^p-section}, except that we have only a Ricci flat limit space. So we will mainly show that the limit space is in fact flat. Using the same argument and notation as before, say in the $C^{\af}$-topology $(M,\tilde g_i=\epsl_i^{-2}g_i,x_i)\to (N,h,x_\infty)$ with $\Ric_h=0$ and $f_i:B_{s_i}^h(x_\infty)\subset N\to (M,\tilde g_i)$ the corresponding diffeomorphisms with $\|f_i^*\tilde g_i-h\|_{C^{\af}(B_{s_i}^h)}\leq 2^{-i}$.
The Case 2i) before makes no difference here, so we only consider the other two cases.

Case 1): when $\Vol_{g_0}B_r^{\tilde g_i}(x_i)\to 0$ for any $r>0$, we also know that $(N,h)$ has maximal volume growth as before, so if we can show $K_h=0$ here then it is done. In fact we can use the conformal property of Weyl tensor to get this result.
Note that when $n\geq 4$ and for any $r>0$ we have
\begin{align*}
\int_{B_r^{\tilde g_i}(x_i)}|W_{\tilde g_i}|^{\frac n2}\dvol_{\tilde g_i}&=\int_{B_r^{\tilde g_i}(x_i)}|W_{g_0}|^{\frac n2}\dvol_{g_0}\\
&\leq \left(\int_M |W_{g_0}|^p\dvol_{g_0}\right)^{\frac1p}\cdot (\Vol_{g_0}B_r^{\tilde g_i}(x_i))^{1-\frac {n}{2p}},
\end{align*}
and also
\begin{align*} 
\int_{B_r^h(x_\infty)}|W_{h}|^{\frac n2}\dvol_{h}
\leq C(r,h)\liminf_{i\to\infty}\int_{B_r^{\hat g_i}(x_i)}|W_{\hat g_i}|^{\frac n2}\dvol_{\hat g_i}.
\end{align*}
Let $i\to \infty$ we know that $$W_h=0.$$
This together with $\Ric_h=0$ implies that $K_h=0$.

Case 2ii): $R_0>0$ and there exists $\ubar r_0>0$, $\ubar V_0>0$ and a subsequence with $\Vol_{g_0}B_{\ubar r_0}^{\tilde g_i}(x_i)\geq \ubar V_0>0$. Also we know $(N,h)$ has maximal volume growth.

Recall the scalar equation discussed in Case 2) of Section \ref{section-L^p-bounded-sectional}, denote the concentration set by $$\Ss=\{x\in N: \lim_{r\to 0}\liminf_{i\to \infty}\int_{B_r^{h}(x)}w_i^{\frac{2n}{n-2}}\dvol_{h}> \frac{\epsl_0}2\},$$
which is a finite set from the fact that $g_{0,i}=f_i^*g_0$ has uniform volume bound.
By Lemma \ref{epsl-regularity} we know up to subsequence $$w_i\wto w\ in\ W^{2,p}_{loc}(N\setminus\Ss).$$
Take a $r_0>0$ large such that the concentration set $\Ss\subset B_{r_0}^h$, then up to subsequence $w_i\wto w$ weakly in $W_{loc}^{2,p}(N_{r_0},h)$ where $N_{r_0}:=N\setminus\bar B_{r_0}^h$. And over $N_{r_0}$, $w\geq 0$ satisfies the weak equation $$-\Delta_hw=a_0R_0w^{\frac4{n-2}}\cdot w,$$ then $w$ is smooth and from maximum principle we know $w>0$. Denote $g_\infty:=w^{\frac 4{n-2}}h$, then $g_{0,i}\to g_\infty$ in $C^{\af}_{loc}(N_{r_0})$ and weakly $W^{2,p}_{loc}(N_{r_0})$. Similarly like before, we may choose subsequence and still denote by $f_i$ such that $$\|f_i^*g_0-g_\infty\|_{C^0(N_{r_0,s_i})}\leq 2^{-i},$$ with $N_{r_0,s_i}=B_{s_i}^h\setminus \bar B_{r_0}^h $. So $g_\infty$ is a smooth metric over $N_{r_0}$ with constant scalar $R_0$, and $$\int_{N_{r_0}}|K_{g_\infty}|^p\dvol_{g_\infty}\leq C(g_0),\ \Vol_{g_\infty}N_{r_0}\leq C(g_0).$$
Intuitively, we can recognize $g_\infty$ as the compact metric $g_0$ and then the infinity of $N$ is in some sense a small neighborhood of some point in $(M,g_0)$, which is almost the Euclidean domain and thus gives our desired rigidity condition.

For this now consider a blow down sequence of pointed manifolds $(N,h_i:=s_i^{-2}h,x_\infty)$ with $s_i\to\infty$ the increasing sequence chosen before. Since $\Ric_{h_i}=0$, there exists a metric space $(N_\infty,d_\infty)$ such that up to subsequence in the Gromov-Hausdorff topology $$(N,h_i,x_\infty)\to (N_\infty,d_\infty,o_\infty).$$ 
We claim that for any $r_1<r_2$, over $N_{r_1,r_2}^{h_i}:=\bar B_{r_2}^{h_i}(x_\infty)\setminus B_{r_1}^{h_i}(x_\infty)$ we have the injectivity radius control $$\inf_{y\in N_{r_1,r_2}^{h_i}}\frac{\inj_{h_i}(y)}{d_{h_i}(y,\ptl N_{r_1,r_2}^{h_i})}\geq i_0(r_1,r_2)>0.$$
Otherwise there exists some $r_1<r_2$ and $$\frac{\inj_{h_i}(y_i)}{d_{h_i}(y_i,\ptl N_{r_1,r_2}^{h_i})}=\inf_{y\in N_{r_1,r_2}^{h_i}}\frac{\inj_{h_i}(y)}{d_{h_i}(y,\ptl N_{r_1,r_2}^{h_i})}=:\tau_i\to 0.$$
Set $c_i:=\inj_{h_i}(y_i)$ and $\Omg_i:=N_{r_1,r_2}^{h_i}\subset N_{r_0}$ for all large $i$. Consider the sequence $(\Omg_i,\tilde h_i:=c_i^{-2}h_i,y_i)$, then $\inj_{\tilde h_i}(y_i)=1$ and $d_{\tilde h_i}(y_i,\ptl\Omg_i)\to\infty$. For any $r>0$ and $d_{\tilde h_i}(y,y_i)\leq r$, we have by definition of $y_i$ that
$$\inj_{\tilde h_i}(y)=\frac{\inj_{h_i}(y)}{c_i}\geq \frac{d_{h_i}(y,\ptl \Omg_i)}{d_{h_i}(y_i,\ptl \Omg_i)}=\frac{ d_{\tilde h_i}(y,\ptl \Omg_i)}{ d_{\tilde h_i}(y_i,\ptl \Omg_i)}\geq 1-\frac{r}{ d_{\tilde h_i}(y_i,\ptl \Omg_i)},$$ then for all $i\geq i(r)$ big enough the RHS has a uniform lower bound. Note $\Ric_{\tilde h_i}=0$, so by Lemma \ref{injectivity} we have $C^{1,\af}$-convergence subsequence say $$(\Omg_i,\tilde h_i,y_i)\to (\Omg_\infty,\tilde h_\infty,y_\infty).$$
Then $(\Omg_\infty,\tilde h_\infty)$ is a complete non-compact Ricci flat manifold with maximal volume growth. Also since $\tilde h_i$ is conformal to $g_\infty$, for any $r>0$ we have 
\begin{align*}\int_{B_r^{\tilde h_i}(y_i)}|W_{\tilde h_i}|^{\frac n2}\dvol_{\tilde h_i}
&=\int_{B_r^{\tilde h_i}(y_i)}|W_{g_\infty}|^{\frac n2}\dvol_{g_\infty}\\
&\leq \left(\int_{B_{rc_is_i}^h(y_i)}|W_{g_\infty}|^p\dvol_{g_\infty}\right)^{\frac n{2p}}\cdot (\Vol_{g_\infty}B_{rc_is_i}^h(y_i))^{1-\frac n{2p}}.
\end{align*}
Note that $g_{0,i}\wto g_\infty$ in $W^{2,p}_{loc}(N_{r_0})$ ensures that the bounded $L^p$-norm of Weyl tensor $$\|W_{g_\infty}\|_{L^p(N_{r_0},g_\infty)}\leq C(g_0),$$ and for all $i$ large enough $$B_{rc_is_i}^h(y_i)=B_{r\tau_i d_h(y_i,\ptl\Omg_i)}^h(y_i)\subset B_{\frac12 d_h(y_i,\ptl\Omg_i)}^h(y_i)\subset\Omg_i,$$
and as $i\to\infty$ $$\Vol_{g_\infty}\Omg_i\leq\Vol_{g_\infty}(\bar B_{r_2s_i}^{h}(x_\infty)\setminus B_{r_1s_i}^{h}(x_\infty)) \to 0.$$ So let $i\to\infty$ we get that $W_{\tilde h_\infty}=0$. This implies that $K_{\tilde h_\infty}=0$ thus $\Omg_\infty=\R^n$, a contradiction with the injectivity radius.

By Lemma \ref{local-compactness} we know that $N_\infty\setminus\{o_\infty\}$ is a manifold with a $C^{1,\af}$-metric $h_\infty$ which is compatible with the distance $d_\infty$ and $h_i\to h_\infty$ in $C^{1,\af}_{loc}(N_\infty\setminus\{o_\infty\})$. As done before let $F_i$ be the corresponding diffeomorhisms and set $g_\infty=w^{\frac 4{n-2}}h=\tilde w_i^{\frac 4{n-2}}h_i$ with $\tilde w_i=s_i^{\frac{n-2}2}w$, $\hat h_i=F_i^*h_i$ and $g_\infty=\hat w_i^{\frac4{n-2}}\hat h_i$ with $\hat w_i=\tilde w_i\circ F_i$, then we have the equation over $N_{r_0}$ $$-\Delta_{h_i}\tilde w_i=a_0R_0\tilde w_i^{\frac 4{n-2}}\tilde w_i,$$
which pullback by $F_i$ becomes equation over $F_i^{-1}N_{r_0}\subset N_\infty\setminus\{o_\infty\}$ $$-\Delta_{\hat h_i}\hat w_i=a_0R_0\hat w_i^{\frac 4{n-2}}\hat w_i.$$ Note that for any $r_1<r_2$, the $C^1$-norm of $h_i$ are bounded and $$\int_{N_{r_1,r_2}^{h_i}}\tilde w_i^{\frac{2n}{n-2}}\dvol_{h_i}=\int_{N_{r_1s_i,r_2s_i}^h}w^{\frac{2n}{n-2}}\dvol_h\to 0,$$
so for all $i$ large the $\epsl$-regularity Lemma \ref{epsl-regularity} can be applied, and pullback to $N_\infty$ we know that up to subsequence there exists $\hat w_\infty\geq 0$ such that $$\hat w_i\wto \hat w_\infty\ in\ W^{2,p}_{loc}(N_\infty\setminus\{o_\infty\}),$$ and $$\int_{N_\infty\setminus\{o_\infty\}}\hat w_\infty^{\frac{2n}{n-2}}\dvol_{h_\infty}=0,$$
which implies that $\hat w_\infty=0$. Thus for any fixed $r_1<r_2$ and small $\delta_0>0$ determined below, for all large enough $i$ we have $$\diam_{g_\infty}N_{r_1,r_2}^{h_i}\leq \frac12 \delta_0.$$

Now we use the diffeomorphisms $f_i$ to identify $g_0$ with $g_\infty$. For any $0<r_1< r_2$, up to subsequence and still denoted by $f_i$, then for all large $i$ we know $$\|f_i^*g_0-g_\infty\|_{C^0(N_{r_1,r_2}^{h_i})}\leq 2^{-i}.$$
For all large $i$, choose a base point $y_i\in N^{h_{i}}_{r_1,r_2}$, then $z_i:=f_i(y_i)\in M$ has a concentration point say $z_\infty\in M$. Then for any large $i$ and any $y\in N^{h_{i}}_{r_1,r_2}$ we have $$d_{g_0}(f_i(y),z_\infty)\leq d_{g_0}(f_i(y),f_i(y_i))+d_{g_0}(f_i(y_i),z_\infty)\leq \delta_0,$$ which means that the domain $$f_i(N_{r_1,r_2}^{h_i})\subset B_{\delta_0}^{g_0}(x_\infty).$$
Since $(M,g_0)$ is compact, for any $\epsl_0=\epsl_0(n)>0$ small to be determined below, by Lemma \ref{yamabe-rigidty} we can choose $\delta_0=\delta_0(g_0)$ small enough such that $$Y(B_{\delta_0}^{g_0}(z_\infty),g_0)\geq (1-\epsl_0)Y(\R^n).$$
So for all large $i$ $$Y(f_i(N^{h_{i}}_{r_1,r_2}),g_0)\geq (1-\epsl_0)Y(\R^n),$$
pullback this by $f_i$ then we have for all large $i$ $$Y(N^{h}_{r_1s_{i},r_2s_{i}},h)=Y(N^{h_{i}}_{r_1,r_2},g_\infty)\geq (1-2\epsl_0)Y(\R^n).$$
Since $\Ric_{h}=0$ this implies that for any $0<r_1<r_2$ and a subsequence $s_{i}\to\infty$ we have $$SD(N^{h}_{r_1s_{i},r_2s_{i}},h)\geq (1-\epsl_0)Y(\R^n).$$
By Lemma \ref{gap-lemma} we can choose $\epsl_0$ small and thus $(N,h)=\R^n$, a contradiction.
\end{proof}

\bibliographystyle{aomalpha}
\bibliography{reference.bib}

\providecommand{\bysame}{\leavevmode\hbox to3em{\hrulefill}\thinspace}
\providecommand{\noopsort}[1]{}
\providecommand{\mr}[1]{\href{http://www.ams.org/mathscinet-getitem?mr=#1}{MR~#1}}
\providecommand{\zbl}[1]{\href{http://www.zentralblatt-math.org/zmath/en/search/?q=an:#1}{Zbl~#1}}
\providecommand{\jfm}[1]{\href{http://www.emis.de/cgi-bin/JFM-item?#1}{JFM~#1}}
\providecommand{\arxiv}[1]{\href{http://www.arxiv.org/abs/#1}{arXiv~#1}}
\providecommand{\doi}[1]{\url{http://dx.doi.org/#1}}
\providecommand{\MR}{\relax\ifhmode\unskip\space\fi MR }
\providecommand{\MRhref}[2]{%
  \href{http://www.ams.org/mathscinet-getitem?mr=#1}{#2}
}
\providecommand{\href}[2]{#2}
\begin{thebibliography}{And90}

\bibitem[And90]{Anderson:1990}
\bgroup\scshape{}M.~T. Anderson\egroup{}, Convergence and rigidity of manifolds
  under {R}icci curvature bounds,  \emph{Invent. Math.} \textbf{102} (1990),
  429--445. \mr{1074481}.  \doi{10.1007/BF01233434}.  Available at
  \url{https://doi.org/10.1007/BF01233434}.

\bibitem[AC92]{AndersonCheeger:1992}
\bgroup\scshape{}M.~T. Anderson\egroup{} and
  \bgroup\scshape{}J.~Cheeger\egroup{}, {$C^\alpha$}-compactness for manifolds
  with {R}icci curvature and injectivity radius bounded below,  \emph{J.
  Differential Geom.} \textbf{35} (1992), 265--281. \mr{1158336}.  Available at
  \url{http://projecteuclid.org/euclid.jdg/1214448075}.

\bibitem[CY90]{ChangYang:1990}
\bgroup\scshape{}S.-Y.~A. Chang\egroup{} and \bgroup\scshape{}P.~C.-P.
  Yang\egroup{}, Isospectral conformal metrics on {$3$}-manifolds,  \emph{J.
  Amer. Math. Soc.} \textbf{3} (1990), 117--145. \mr{1015647}.
  \doi{10.2307/1990985}.  Available at \url{https://doi.org/10.2307/1990985}.

\bibitem[Che70]{Cheeger:1970}
\bgroup\scshape{}J.~Cheeger\egroup{}, Finiteness theorems for {R}iemannian
  manifolds,  \emph{Amer. J. Math.} \textbf{92} (1970), 61--74. \mr{0263092}.
  \doi{10.2307/2373498}.  Available at \url{https://doi.org/10.2307/2373498}.

\bibitem[GT01]{GilbargTrudinger:2001}
\bgroup\scshape{}D.~Gilbarg\egroup{} and \bgroup\scshape{}N.~S.
  Trudinger\egroup{}, \emph{Elliptic partial differential equations of second
  order}, \emph{Classics in Mathematics}, Springer-Verlag, Berlin, 2001,
  Reprint of the 1998 edition. \mr{1814364}.

\bibitem[Gro81]{Gromov:1981}
\bgroup\scshape{}M.~Gromov\egroup{}, \emph{Structures m\'{e}triques pour les
  vari\'{e}t\'{e}s riemanniennes}, \emph{Textes Math\'{e}matiques [Mathematical
  Texts]} \textbf{1}, CEDIC, Paris, 1981, Edited by J. Lafontaine and P. Pansu.
  \mr{682063}.

\bibitem[Gur93]{Gursky:1993}
\bgroup\scshape{}M.~J. Gursky\egroup{}, Compactness of conformal metrics with
  integral bounds on curvature,  \emph{Duke Math. J.} \textbf{72} (1993),
  339--367. \mr{1248676}.  \doi{10.1215/S0012-7094-93-07212-2}.  Available at
  \url{https://doi.org/10.1215/S0012-7094-93-07212-2}.

\bibitem[HH97]{HebeyHerzlich:1997}
\bgroup\scshape{}E.~Hebey\egroup{} and \bgroup\scshape{}M.~Herzlich\egroup{},
  Harmonic coordinates, harmonic radius and convergence of {R}iemannian
  manifolds,  \emph{Rend. Mat. Appl. (7)} \textbf{17} (1997), 569--605 (1998).
  \mr{1620864}.

\bibitem[LZ17]{LiZhou:2017}
\bgroup\scshape{}Y.~{Li}\egroup{} and \bgroup\scshape{}Z.~{Zhou}\egroup{},
  {Conformal metric sequences with integral-bounded scalar curvature},
  \emph{ArXiv e-prints} (2017). Available at
  \url{https://arxiv.org/abs/1706.03919}.

\bibitem[PW97]{PetersenWei:1997}
\bgroup\scshape{}P.~Petersen\egroup{} and \bgroup\scshape{}G.~Wei\egroup{},
  Relative volume comparison with integral curvature bounds,  \emph{Geom.
  Funct. Anal.} \textbf{7} (1997), 1031--1045. \mr{1487753}.
  \doi{10.1007/s000390050036}.  Available at
  \url{https://doi.org/10.1007/s000390050036}.

\bibitem[Pet97]{Petersen:1997}
\bgroup\scshape{}P.~Petersen\egroup{}, Convergence theorems in {R}iemannian
  geometry,  in \emph{Comparison geometry ({B}erkeley, {CA}, 1993--94)},
  \emph{Math. Sci. Res. Inst. Publ.} \textbf{30}, Cambridge Univ. Press,
  Cambridge, 1997, pp.~167--202. \mr{1452874}.  \doi{10.2977/prims/1195166127}.
   Available at \url{https://doi.org/10.2977/prims/1195166127}.

\bibitem[SY94]{SchoenYau:1994}
\bgroup\scshape{}R.~Schoen\egroup{} and \bgroup\scshape{}S.-T. Yau\egroup{},
  \emph{Lectures on differential geometry}, \emph{Conference Proceedings and
  Lecture Notes in Geometry and Topology, I}, International Press, Cambridge,
  MA, 1994. \mr{1333601}.

\end{thebibliography}

\end{document}